\newfont{\msbm}{msbm10 at 11pt}
\newcommand {\R} {\mbox{\msbm R}}
\newcommand {\N} {\mbox{\msbm N}}
\newcommand {\1} {\mathds{1}}
\def\Var{\textup{Var}}
\def\eps{\varepsilon}
\newtheorem{Theo}{Theorem}
\newtheorem{Lemma}[Theo]{Lemma}
\newtheorem{Cor}[Theo]{Corollary}
\newtheorem{Prop}[Theo]{Proposition}
\begin{document}

\title{Mutation timing in a spatial model of evolution}
\author{Jasmine Foo\thanks{Supported in part by NSF Grant DMS-1349724 and the Fulbright Foundation}, Kevin Leder\thanks{Supported in part by NSF Grant CMMI-1552764 and the Fulbright Foundation}, and Jason Schweinsberg\thanks{Supported in part by NSF Grant DMS-1707953}}

\maketitle

\begin{abstract}
Motivated by models of cancer formation in which cells need to acquire $k$ mutations to become cancerous, we consider a spatial population model in which the population is represented by the $d$-dimensional torus of side length $L$.  Initially, no sites have mutations, but sites with $i-1$ mutations acquire an $i$th mutation at rate $\mu_i$ per unit area.  Mutations spread to neighboring sites at rate $\alpha$, so that $t$ time units after a mutation, the region of individuals that have acquired the mutation will be a ball of radius $\alpha t$.  We calculate, for some ranges of the parameter values, the asymptotic distribution of the time required for some individual to acquire $k$ mutations.  Our results, which build on previous work of Durrett, Foo, and Leder, are essentially complete when $k = 2$ and when $\mu_i = \mu$ for all $i$.
\end{abstract}

\section{Introduction}
Cancer is widely thought to arise due to a series of oncogenic mutations accumulating in a cell.   Mathematical work on this subject goes back to the celebrated 1954 paper of Armitage and Doll \cite{armdoll}, who proposed a model in which a cell that has already acquired $k-1$ mutations receives a $k$th mutation at rate $\mu_k$.  They showed that for small $t$, the probability that such a cell receives its $k$th mutation in the time interval $[t, t + dt]$ is approximately $$\frac{\mu_1 \mu_2 \dots \mu_k t^{k-1}}{(k-1)!} \: dt.$$  They also examined data on 17 types of cancer and found that in many instances the cancer incidence rate increases proportional to a power of age, consistent with their multi-stage model.

More recently, the model of Armitage and Doll has been extended in multiple ways.  Some authors have incorporated cell division and death by considering a Moran-type model of $N$ cells in which each cell dies at rate one, at which time a cell is chosen at random from the population to divide into two.  For some results on the distribution of the time required for two mutations to appear in this model, see \cite{imkn, imn, ksn}.  Results on the distribution of the time required for $k$ mutations to appear were obtained in \cite{dss} for some ranges of values of the mutation rates $\mu_1, \dots, \mu_k$, and essentially complete results for the case when $\mu_i = \mu$ for all $i$ were established in \cite{sch08}.  Another extension of the model is to allow for the possibility that cells that have acquired several mutations along the pathway to cancer may have a selective advantage over other cells.  In this case the number of cells with these mutations will evolve like a supercritical branching process.  Results for the distribution of the time required for some cell to accumulate $k$ mutations in this setting were established by Durrett and Moseley \cite{dm10}, with further extensions in \cite{dflmm, dm11}.

Another extension, which is especially relevant for solid tumors, is to consider a model with spatial structure.  One of the earliest spatially explicit stochastic models was developed by Williams and Bjerknes \cite{wb72} who used an interacting particle system on $\mathbb{Z}^2$ to model the spread of cancer cells in epithelial tissue. In this model, each site of the lattice is occupied by either a healthy or cancer cell.  Healthy cells divide at rate 1, and cancer cells divide at rate $1+\beta$ for some positive $\beta$. Upon a cell division the daughter cell randomly replaces one of the four nearest lattice neighbors. This stochastic process would later become known to the probability community as the biased voter model. Using the gambler's ruin formula, Williams and Bjerknes were able to derive results on the probability that the tumor cell and its descendants would eventually take over the entire population. In addition, they made several conjectures about the shape of the mutant cell population (conditioned on survival) that were later disproven in \cite{moll72}. The work of Williams and Bjerknes motivated Bramson and Griffeath \cite{bg1, bg2} to establish a rigorous shape theorem for the biased voter model in 2 and higher dimensions. More recently Komarova \cite{k06} studied a model that is very similar to this in one dimension, and then Durrett and Moseley \cite{dm15} extended her work by calculating the asymptotic distribution of the time required for some individual to acquire two mutations in $d \geq 2$.  Durrett and Moseley considered the case in which cells with one mutation have close to the same fitness level as cells with no mutations.  Durrett, Foo, and Leder \cite{dfl16} performed a similar analysis for the case in which cells with one mutation do have a selective advantage.  See also \cite{mh11, mkmh11} for related work.

For some of their results, Durrett, Foo, and Leder \cite{dfl16} worked not with the biased voter model but with a simpler model with continuous space which, in view of the shape theorem proved by Bramson and Griffeath \cite{bg1, bg2}, should approximate the behavior of the biased voter model. In addition \cite{flr14, rlrlf16} used this model to make quantitative predictions regarding the cancer field effect.  In this paper, we will consider a slight variation of their model, which we now describe.  We consider the $d$-dimensional torus $[0, L]^d$, where $d \geq 1$, and let $N = L^d$ be the volume of the torus.  Each site on the torus will be assigned a type, representing the number of mutations carried by the individual at that site.  At time zero, all sites have type zero.  At the times and locations of a homogeneous Poisson process of rate $\mu_1$ per unit area, a mutation to type 1 occurs.  A region of type 1 individuals then grows outward from this point at rate $\alpha$ per unit time. This means that $t$ time units after the original mutation, the region of type 1 individuals will be a ball of radius $\alpha t$, which eventually expands to cover the entire torus.  Type 1 individuals acquire a second mutation at rate $\mu_2$ per unit area, causing a region of type 2 individuals to grow outward at rate $\alpha$ per unit time.  This process then continues indefinitely, with type $k-1$ sites acquiring a $k$th mutation at rate $\mu_k$ per unit area, producing a region of type $k$ individuals which grows outward at rate $\alpha$ per unit time.  We denote by $\sigma_k$ the first time at which some site has acquired $k$ mutations.

The model described here is essentially the model considered in \cite{dfl16}, except that here we consider only the ``successful" mutations, whereas the model in \cite{dfl16} also attempted to account for the mutations that quickly die out.  Because we are only modeling the successful mutations, the mutation rate $\mu_i$ corresponds to $u_i s$ in \cite{dfl16}.  Our parameter $\alpha$, which measures the rate at which a beneficial mutation spreads to neighboring sites, is called $c_d(s)$ in \cite{dfl16}.  Ralph and Coop \cite{rc10} also considered a spatial model very similar to the one studied here.

Our goal in this paper is twofold.  First, we calculate the asymptotic distribution of $\sigma_2$.  This builds on the work of Durrett, Foo, and Leder \cite{dfl16}, who calculated this distribution for some ranges of the parameter values.  Second, we extend these results by computing, for some ranges for the parameters, the asymptotic distribution of $\sigma_k$ for $k \geq 3$, which is relevant for types of cancer that have more than two stages in their development.  In this case, because of the complexity of the problem, while we state some of our results for general mutation rates $\mu_i$, our results are only essentially complete in the case when we assume all mutation rates are the same, so that $\mu_i = \mu$ for all $i$.  We always assume that mutations spread at rate $\alpha$, which is essentially equivalent to the assumption that the selective advantage of type $i$ individuals over type $i-1$ individuals is the same for all $i$.  One could consider a more general model in which a region of type $i$ individuals spreads at rate $\alpha_i$, but we do not pursue this extension here.  Note that it is only clear how to formulate this model when $\alpha_1 \geq \alpha_2 \geq \dots$ because if $\alpha_i > \alpha_{i-1}$, then eventually the type $i$ region could completely swallow the type $i-1$ region and become adjacent to regions of type $k-2$ or lower, presumably allowing it to expand faster.

In Section \ref{2sec}, we describe our results for the asymptotic distribution of $\sigma_2$, and we explain heuristically why these should results should be true.  We give similar heuristics for the asymptotic distribution of $\sigma_k$ for $k \geq 3$ in Section \ref{ksec}.  We then give mathematically precise statements of the results, as well as complete proofs of the results, in Section \ref{proofsec}.

\section{Waiting for two mutations: results and heuristics}\label{2sec}

We summarize here the asymptotic results as $N \rightarrow \infty$ for the distribution of $\sigma_2$, the time that it takes for some site to acquire two mutations.  The distribution of $\sigma_2$ depends on the values of the parameters $\mu_1$, $\mu_2$, $\alpha$ and $N$.  Note that it should be understood that these parameters depend on $N$, even though this dependence is not recorded in the notation.  There are 11 different behaviors that are possible, depending on the parameter values.  We will let $\gamma_d$ denote the volume of the unit ball in $\R^d$, which appears in several of the limit theorems.

Given sequences $(a_N)_{N=1}^{\infty}$ and $(b_N)_{N=1}^{\infty}$, the notation $a_N \ll b_N$ means $\lim_{N \rightarrow \infty} a_N/b_N = 0$, and $a_N \gg b_N$ means $\lim_{N \rightarrow \infty} a_N/b_N = \infty$.  Also, $a_N \asymp b_N$ means $0 < \liminf_{N \rightarrow \infty} a_N/b_N \leq \limsup_{N \rightarrow \infty} a_N/b_N < \infty$.  We use $\Rightarrow$ to denote convergence in distribution as $N \rightarrow \infty$.

\bigskip
\noindent {\bf Case 1}: ${\displaystyle \mu_1 \ll \frac{\alpha}{N^{(d+1)/d}}}$ and $\mu_2 \gg \mu_1$.
\bigskip

The time that it takes before the first mutation appears is exponentially distributed with rate $N \mu_1$.  Because $L = N^{1/d}$, and the maximum distance between any two points on the $d$-dimensional torus is $\sqrt{d} L/2$, the time required for a mutation to spread to the entire population (or fixate) is $(\sqrt{d} N^{1/d})/(2 \alpha)$.  Thus, when $N^{1/d}/\alpha \ll 1/(N\mu_1)$, which is equivalent to the assumption that $\mu_1 \ll \alpha/N^{(d+1)/d}$, the time required for the first mutation to spread to fixate, once it appears, is much less than the time that it takes for the mutation to appear.  When $\mu_2 \gg \mu_1$, the second mutation appears much faster than the first one.  The dominant waiting time is therefore the time to wait for the first mutation, and we have
$$N \mu_1 \sigma_2 \Rightarrow W,\hspace{.3in}W \sim \textup{Exponential}(1).$$

\bigskip
\noindent {\bf Case 2}: ${\displaystyle \mu_1 \ll \frac{\alpha}{N^{(d+1)/d}}}$ and $\mu_2 \ll \mu_1$.
\bigskip

As in Case 1, the first mutation fixates very soon after it appears.  This time, the waiting time for the second mutation is much longer, which means $$N \mu_2 \sigma_2 \Rightarrow W,\hspace{.3in}W \sim \textup{Exponential}(1).$$

\bigskip
\noindent {\bf Case 3}: ${\displaystyle \mu_1 \ll \frac{\alpha}{N^{(d+1)/d}}}$ and ${\displaystyle \frac{\mu_2}{\mu_1} \rightarrow c \in (0, \infty)}$.
\bigskip

As in Case 1, the first mutation fixates very soon after it appears.  This time, the waiting times for the first and second mutations are the same order of magnitude, so the limit distribution is a sum of independent exponential random variables.  Therefore, $$N \mu_1 \sigma_2 \Rightarrow W_1 + W_2,$$ where $W_1 \sim \textup{Exponential}(1)$, $W_2 \sim \textup{Exponential}(c)$, and $W_1$ and $W_2$ are independent.  The results for Case 1, Case 2, and Case 3 all follow from Theorem \ref{expwait} in Section \ref{proofsec} below, as explained in the paragraph following the statement of Theorem \ref{expwait}.

\bigskip
\noindent {\bf Case 4}: ${\displaystyle \mu_1 \gg \frac{\alpha}{N^{(d+1)/d}}}$ and ${\displaystyle \mu_2 \gg \frac{(N \mu_1)^{d+1}}{\alpha^d}}.$ 
\bigskip

When $\mu_1 \gg \alpha/N^{(d+1)/d}$, it takes longer for a mutation to fixate, once it has appeared, than it takes for a mutation to appear.  This means that many mutations will appear before the entire population has acquired a mutation.  A mutation that appears at a given time will have grown to size $\gamma_d (\alpha r)^d$ after time $r$, and so the probability that a second mutation appears in this ball within $t$ time units after the original mutation occurs is
\begin{equation}\label{prob1}
1 - \exp \bigg(-\int_0^t \mu_2 \gamma_d (\alpha r)^d \: dr \bigg) = 1 - \exp \bigg(- \frac{\gamma_d}{d+1} \cdot \mu_2 \alpha^d t^{d+1}\bigg).
\end{equation}
It follows that the second mutation occurs when $t$ is comparable to $(\mu_2 \alpha^d)^{-1/(d+1)}$.  Therefore, when $(\mu_2 \alpha^d)^{-1/(d+1)} \ll 1/(N \mu_1)$, which is equivalent to our second assumption, this second mutation appears more quickly than the first mutation.  The dominant waiting time is therefore the time to wait for the first mutation, and we have, and we have
$$N \mu_1 \sigma_2 \Rightarrow W,\hspace{.3in}W \sim \textup{Exponential}(1).$$  
This result was proved by Durrett, Foo, and Leder, as part of Theorem 3 of \cite{dfl16}, so we do not give a proof here.  This case can be illustrated as follows:
\begin{center}
\setlength{\unitlength}{1cm}
\begin{picture}(4,2)
\put(0,0){\line(1,0){4}}
\put(4,0){\line(0,1){2}}
\put(4,2){\line(-1,0){4}}
\put(0,2){\line(0,-1){2}}
\put(2.5,1){\circle{0.4}}
\put(2.55,0.95){\circle*{0.1}}
\end{picture}
\end{center}

\bigskip
\noindent {\bf Case 5}: ${\displaystyle \mu_1 \gg \frac{\alpha}{N^{(d+1)/d}}}$ and ${\displaystyle \frac{\mu_2 \alpha^d}{(N \mu_1)^{d+1}} \rightarrow c \in (0, \infty)}.$
\bigskip

In this case, the time between the first and second mutations is the same order of magnitude as the time to wait for the first mutation.  As a result, there could be several small regions with one mutation before the second mutation appears.  Writing $t' = t/(N \mu_1)$, using (\ref{prob1}), and making the substitution $y = N \mu_1 (t' - r)$, we have, as shown in Theorem 4 of \cite{dfl16},
\begin{align*}
P(N \mu_1 \sigma_2 > t) &\approx \exp \bigg( - \int_0^{t'} N \mu_1 \Big(1 - \exp \Big( - \frac{\gamma_d}{d+1} \cdot \mu_2 \alpha^d (t' - r)^{d+1} \Big) \Big) \: dr \bigg) \\
&\rightarrow \exp \bigg( - \int_0^t \Big( 1 - \exp \Big( - \frac{c \gamma_d y^{d+1}}{d+1} \Big) \Big) \: dy \bigg).
\end{align*}
Note that assumption (A1) in \cite{dfl16} is equivalent to the condition $\mu_1 \gg \alpha/N^{(d+1)/d}$ when $\mu_2 \alpha^d$ and $(N \mu_1)^{d+1}$ are the same order of magnitude, as we are assuming here.  Also, the assumption $\mu_2 \alpha^2/(N \mu_1)^{d+1} \rightarrow c$ is equivalent to $\Gamma \rightarrow 1/c$ in the notation of \cite{dfl16}.  This case can be illustrated as follows:
\begin{center}
\setlength{\unitlength}{1cm}
\begin{picture}(4,2)
\put(0,0){\line(1,0){4}}
\put(4,0){\line(0,1){2}}
\put(4,2){\line(-1,0){4}}
\put(0,2){\line(0,-1){2}}
\put(3.1, 1.6){\circle{0.2}}
\put(1.7,1){\circle{0.4}}
\put(1.75,0.95){\circle*{0.1}}
\put(3.3, 0.4){\circle{0.3}}
\end{picture}
\end{center}

\bigskip
\noindent {\bf Case 6}: ${\displaystyle \mu_1 \gg \frac{\alpha}{N^{(d+1)/d}}}$ and ${\displaystyle \frac{(\mu_1 \alpha^d)^{1/(d+1)}}{N} \ll \mu_2 \ll \frac{(N \mu_1)^{d+1}}{\alpha^d}}$.
\bigskip

The second inequality in the second assumption ensures that the second mutation will not appear until the number of regions with one mutation is large.  When the number of regions with one mutation is large, the fraction of the space filled with type 1 individuals should be well approximated by its expectation because no individual region contributes a large fraction of the type 1 individuals.  At time $t$, the probability that a particular site is occupied by an individual of type 1 or higher is
\begin{equation}\label{qteq}
q(t) = 1 - \exp \bigg( -\int_0^t \mu_1 \gamma_d (\alpha r)^d \: dr \bigg) = 1 - \exp \bigg( - \frac{\gamma_d}{d+1} \cdot \mu_1 \alpha^d t^{d+1} \bigg),
\end{equation}
which means that mutants fill a large fraction of the space when $t$ is of the order $(\mu_1 \alpha^d)^{-1/(d+1)}$.  For smaller values of $t$, we can use the approximation $1 - e^{-x} \approx x$ to estimate $q(t)$, and therefore
$$P(\sigma_2 > t) \approx \exp \bigg( - \int_0^t N \mu_2 q(r) \: dr \bigg) \approx \exp \bigg( - \frac{\gamma_d}{(d+1)(d+2)} \cdot N \mu_1 \mu_2 \alpha^d t^{d+2} \: dt \bigg).$$  This means that the second mutation arises when $t$ is of the order $(N \mu_1 \mu_2 \alpha^d)^{-1/(d+2)}$, which is much smaller than the time that it takes for mutants to fill a large fraction of the space precisely when the first inequality in the second assumption holds.  We then get
$$P\big( (N \mu_1 \mu_2 \alpha^d)^{1/(d+2)} \sigma_2 > t \big) \rightarrow \exp \bigg( - \frac{\gamma_d t^{d+2}}{(d+1)(d+2)} \bigg).$$  This result is Part 1 of Theorem \ref{thm:6and7andCaseII} below.  This case can be illustrated as follows:
\begin{center}
\setlength{\unitlength}{1cm}
\begin{picture}(4,2)
\put(0,0){\line(1,0){4}}
\put(4,0){\line(0,1){2}}
\put(4,2){\line(-1,0){4}}
\put(0,2){\line(0,-1){2}}
\put(3.1, 1.6){\circle{0.2}}
\put(2.7,1){\circle{0.35}}
\put(2.75, 0.93){\circle*{0.1}}
\put(3.3, 0.4){\circle{0.25}}
\put(0.3, 0.7){\circle{0.25}}
\put(0.5, 1.1){\circle{0.2}}
\put(1.2, 1.7){\circle{0.2}}
\put(1.1, 0.8){\circle{0.15}}
\put(2.2, 1.0){\circle{0.2}}
\put(2.5, 1.6){\circle{0.2}}
\put(1.7, 1.2){\circle{0.15}}
\put(2.1, 0.2){\circle{0.15}}
\put(3.6, 1.0){\circle{0.1}}
\put(1.5, 0.4){\circle{0.1}}
\put(0.8, 1.2){\circle{0.2}}
\end{picture}
\end{center}

\bigskip
\noindent {\bf Case 7}: ${\displaystyle \mu_1 \gg \frac{\alpha}{N^{(d+1)/d}}}$ and ${\displaystyle \frac{N \mu_2}{(\mu_1 \alpha^d)^{1/(d+1)}} \rightarrow c \in (0, \infty)}.$
\bigskip

In this case, the time when the second mutation appears is the same order of magnitude as the time when type 1 individuals start to fill a large fraction of the space.  This means that the overlaps between different type 1 regions become significant and we can no longer use the approximation to $q(t)$ that was used in Case 6.  Instead, we use (\ref{qteq}) directly.  Writing $t' = t/(N \mu_1 \mu_2 \alpha^d)^{1/(d+2)}$ and making the substitution $y = r(N \mu_1 \mu_2 \alpha^d)^{1/(d+2)}$, we have
\begin{align*}
P \big( (N \mu_1 \mu_2 \alpha^d)^{1/(d+2)} \sigma_2 > t \big) &\approx \exp \bigg( - \int_0^{t'} N \mu_2 \Big(1 - \exp \Big( - \frac{\gamma_d}{d+1} \cdot \mu_1 \alpha^d r^{d+1} \Big) \Big) \: dr \bigg) \\
&\rightarrow \exp \bigg( - c^{(d+1)/(d+2)} \int_0^t \Big(1 - \exp \Big( -\frac{\gamma_d y^{d+1}}{(d+1) c^{(d+1)/(d+2)}} \Big) \Big) \: dy \bigg).
\end{align*}
This result is Part 2 of Theorem \ref{thm:6and7andCaseII} below.  This case can be illustrated as follows:
\begin{center}
\setlength{\unitlength}{1cm}
\begin{picture}(4,2)
\put(0,0){\line(1,0){4}}
\put(4,0){\line(0,1){2}}
\put(4,2){\line(-1,0){4}}
\put(0,2){\line(0,-1){2}}
\put(3.1, 1.6){\circle{0.2}}
\put(1.5,1.2){\circle{0.35}}
\put(1.5, 1.23){\circle*{0.1}}
\put(3.3, 0.4){\circle{0.25}}
\put(0.3, 0.7){\circle{0.25}}
\put(0.8, 1.1){\circle{0.2}}
\put(1.2, 1.7){\circle{0.2}}
\put(1.1, 0.8){\circle{0.15}}
\put(2.2, 1.0){\circle{0.2}}
\put(2.1, 1.8){\circle{0.2}}
\put(1.9, 1.6){\circle{0.15}}
\put(2.3, 0.4){\circle{0.15}}
\put(3.6, 1.3){\circle{0.15}}
\put(1.5, 0.4){\circle{0.1}}
\put(1.7, 0.4){\circle{0.2}}
\put(1.8, 0.5){\circle{0.3}}
\put(3.1, 1.1){\circle{0.35}}
\put(3.2, 1.3){\circle{0.3}}
\put(2.7, 0.5){\circle{0.2}}
\put(2.8, 1.8){\circle{0.35}}
\put(0.4, 1.6){\circle{0.25}}
\put(0.6, 1.8){\circle{0.2}}
\put(0.8, 1.7){\circle{0.35}}
\put(2,1){\circle{0.3}}
\put(2.5, 1.2){\circle{0.35}}
\put(2.7, 1.6){\circle{0.15}}
\put(0.6, 1.0){\circle{0.3}}
\put(3.5, 1.6){\circle{0.35}}
\put(3.7, 0.5){\circle{0.2}}
\put(1.8, 1.5){\circle{0.3}}
\put(2.1, 0.5){\circle{0.3}}
\put(1.1, 0.3){\circle{0.25}}
\put(0.6, 1.0){\circle{0.3}}
\put(1.7, 0.7){\circle{0.35}}
\put(2.4, 0.8){\circle{0.2}}
\put(0.7, 1.2){\circle{0.3}}
\put(3.3, 0.3){\circle{0.3}}
\put(2.6, 0.7){\circle{0.25}}
\put(0.2, 0.5){\circle{0.3}}
\put(3.3, 1.0){\circle{0.2}}
\put(2.0, 0.2){\circle{0.25}}
\put(0.6, 0.2){\circle{0.3}}
\put(1.5, 1.8){\circle{0.3}}
\put(1.3, 1.7){\circle{0.25}}
\put(3.6, 1.0){\circle{0.3}}
\put(3.5, 0.8){\circle{0.2}}
\put(2.8, 0.7){\circle{0.4}}
\put(1.3, 0.6){\circle{0.3}}
\put(1.3, 0.7){\circle{0.35}}
\put(1.1, 1.5){\circle{0.45}}
\end{picture}
\end{center}

\bigskip
\noindent {\bf Case 8}: ${\displaystyle \mu_1 \gg \frac{\alpha}{N^{(d+1)/d}}}$ and ${\displaystyle \mu_2 \ll \frac{(\mu_1 \alpha^d)^{1/(d+1)}}{N}}$.
\bigskip

In this case, the second mutation does not appear until after the space has been almost completely filled with many type $1$ regions.  By that point, second mutations are occurring at rate approximately $N \mu_2$, so just as in Case 2, we have
$$N \mu_2 \sigma_2 \Rightarrow W,\hspace{.3in}W \sim \textup{Exponential}(1).$$
See Theorem \ref{thm:sigma2Case8} below for the precise statement and proof.

\bigskip
\noindent {\bf Case 9}: ${\displaystyle \mu_1 \asymp \frac{\alpha}{N^{(d+1)/d}}}$ and ${\displaystyle \mu_2 \gg \frac{(N \mu_1)^{d+1}}{\alpha^d}}.$
\bigskip

When $\mu_1 \asymp \alpha/N^{(d+1)/d}$, the time required for the first mutation to appear is comparable to the time required for the first mutation to fixate.  When $\mu_2 \gg (N \mu_1)^{d+1}/\alpha^d$, the second mutation appears on a faster time scale.  Therefore, the waiting time for the first mutation is the dominant waiting time, which means $$N \mu_1 \sigma_2 \Rightarrow W,\hspace{.3in}W \sim \textup{Exponential}(1).$$
This was proved as part of Theorem 3 in \cite{dfl16}.  Note that this case is very similar to Case 4, and the two cases could easily be combined, as they were in \cite{dfl16}.

\bigskip
\noindent {\bf Case 10}: ${\displaystyle \mu_1 \asymp \frac{\alpha}{N^{(d+1)/d}}}$ and ${\displaystyle \mu_2 \ll \frac{(N \mu_1)^{d+1}}{\alpha^d}}.$
\bigskip

As in Case 9, the time required for the first mutation to appear is comparable to the time required for the first mutation to fixate.  However, when $\mu_2 \ll (N \mu_1)^{d+1}/\alpha^d$, it takes much longer for the second mutation to appear.  Therefore, the dominant waiting time is the time to wait for the second mutation after the first has fixated, and much as in Case 2, we have
$$N \mu_2 \sigma_2 \Rightarrow W,\hspace{.3in}W \sim \textup{Exponential}(1).$$ 
This result is proved in Theorem \ref{eq:Sig2Case10} below.

\bigskip
\noindent {\bf Case 11}: ${\displaystyle \mu_1 \asymp \frac{\alpha}{N^{(d+1)/d}}}$ and ${\displaystyle \mu_2 \asymp \frac{(N \mu_1)^{d+1}}{\alpha^d}}.$
\bigskip

In this case, the time that it takes for the first mutation to appear, the time that it takes for the first mutation to fixate once it has appeared, and the time that it takes for the second mutation to appear after the first one are all on the same time scale.  As a result, we can not consider just a single region of type 1 individuals as in Case 4, nor can we assume the type 1 regions are disjoint as in Case 5, nor can we assume that the fraction of the population with type~1 is approximately deterministic as in Cases 6 and 7, nor can we assume the type 1 individuals almost completely fill the space as in Case 8.  Instead, the full geometry of the problem must be taken into account.  We have $$N \mu_1 \sigma_2 \Rightarrow X,$$ where $X$ is a nondegenerate random variable.  This result is established in Corollary \ref{case11} below.  We do not have a complete description of the distribution of $X$, but some information about the distribution is established in Propositions \ref{distprop1} and \ref{distprop2} below.  This case can be illustrated as follows:
\begin{center}
\setlength{\unitlength}{1cm}
\begin{picture}(4,2)
\put(0,0){\line(1,0){4}}
\put(4,0){\line(0,1){2}}
\put(4,2){\line(-1,0){4}}
\put(0,2){\line(0,-1){2}}
\put(2.4, 1.3){\circle{1.1}}
\put(1.7,1){\circle{1.3}}
\put(1.45,0.95){\circle*{0.1}}
\put(2.8, 0.8){\circle{1.4}}
\end{picture}
\end{center}

\section{Waiting for $k$ mutations: results and heuristics}\label{ksec}

In this section, we summarize our asymptotic results for the distribution of $\sigma_k$ when $k \geq 3$.  We focus here on the case in which all of the mutation rates are the same, that is, we have $\mu_i = \mu$ for all $i$, although some of the results in Section \ref{proofsec} will be stated in greater generality when that can be done without additional effort.  This time, there are three cases to consider.

\bigskip
\noindent {\bf Case 1}: ${\displaystyle \mu \ll \frac{\alpha}{N^{(d+1)/d}}}$
\bigskip

In this case, it takes longer to wait for a mutation than it does for a mutation to fixate once it has appeared.  The waiting time for $k$ mutations is therefore approximately a sum of $k$ exponentially distributed random variables, which leads to the result
$$N \mu \sigma_k \Rightarrow Y, \hspace{.5in}Y \sim \textup{Gamma}(k,1).$$  This result is similar to Cases 1, 2, and 3 when $k = 2$ and can be deduced from Theorem \ref{expwait}, as noted in the paragraph following the statement of Theorem \ref{expwait}.

\bigskip
\noindent {\bf Case 2}: ${\displaystyle \mu \gg \frac{\alpha}{N^{(d+1)/d}}}$
\bigskip

In this case, mutations appear on a faster time scale than what is required for mutations to fixate, and so for $j \geq 2$, we end up with many small regions with $j-1$ mutations before any individual acquires a $j$th mutation.  Because there are many small regions with $j-1$ mutations, we are able to approximate the total size of these regions by its expectation.

We will define an approximation $v_k(t)$ to the volume of regions with at least $k$ mutations at time $t$.  We set $v_0(t) = N$ for all $t$.  Because, at time $r$, mutations to type $k$ are occurring at rate $\mu_k v_{k-1}(r)$, and such a mutation will lead to a type $k$ region of size $\gamma_d (\alpha(t - r))^d$ at time $t$, we define
\begin{equation}\label{vkpre}
v_k(t) = \int_0^t \mu_k v_{k-1}(r) \gamma_d (\alpha(t-r))^d \: dr.
\end{equation}
One can then verify by induction on $k$ that 
\begin{equation}\label{vkdef}
v_k(t) = \frac{\gamma_d^k (d!)^k}{(k(d+1))!} \bigg( \prod_{i=1}^k \mu_i \bigg) N \alpha^{kd} t^{k(d+1)}.
\end{equation}
To see this, one can first use the induction hypothesis to get $$v_k(t) = \frac{\gamma_d^k (d!)^{k-1}}{((k-1)(d+1))!} \bigg( \prod_{i=1}^k \mu_i \bigg) N \alpha^{kd} \int_0^t r^{(k-1)(d+1)} (t - r)^d \: dr,$$ and then make the substitution $y = r/t$ and use that $\int_0^1 y^a(1-y)^b \: dy = a!b!/(a+b+1)!$ for nonnegative integers $a$ and $b$ to obtain the result.  Equation (\ref{vkdef}) leads to the approximation
$$P(\sigma_k > t) \approx \exp \bigg(- \int_0^t \mu_k v_{k-1}(r) \: dr \bigg) = \exp \bigg( - \frac{\gamma_d^{k-1} (d!)^{k-1}}{((k-1)d + k)!} \bigg( \prod_{i=1}^k \mu_i \bigg) N \alpha^{(k-1)d} t^{(k-1)d + k} \bigg).$$  In particular, defining
\begin{equation}
\label{eq:beta_def}
\beta_k=\bigg(N\alpha^{(k-1)d}\prod_{i=1}^k\mu_i\bigg)^{-1/((k-1)d+k)},
\end{equation}
we get
\begin{equation}\label{limkbeta}
P \big( \sigma_k > \beta_k t) \rightarrow \exp \bigg( - \frac{\gamma_d^{k-1} (d!)^{k-1}}{((k-1)d + k)!} \cdot t^{(k-1)d + k} \bigg).
\end{equation}
This result is part 3 of Theorem \ref{thm:6and7andCaseII}.

In (\ref{vkpre}), we are double counting the volume in places where two or more type $k$ regions overlap.  Consequently, (\ref{vkdef}) will only be a good approximation to the total volume of the type $k$ regions if this overlap is small.  This will be the case if $v_k(t) \ll v_{k-1}(t)$, so that only a small fraction of the cells that have acquired at least $k-1$ mutations have also acquired a $k$th mutation.  We have $v_k(t) \ll v_{k-1}(t)$ if and only if $t \ll (\mu_k \alpha^d)^{-1/(d+1)}$.  Indeed, equation (\ref{limkbeta}) indicates that $\sigma_k$ should be of the order $\beta_k$, and when $\mu_i = \mu$ for all $i$, one can check that the condition $\beta_k \ll (\mu \alpha^d)^{-1/(d+1)}$ is equivalent to the condition for Case 2.

Note that if we use (\ref{limkbeta}) to obtain an asymptotic expression for $P(\sigma_k \leq \beta_k t)$ for small $t$, then use the approximation $1 - e^{-x} \approx x$ and differentiate with respect to $t$, we see that the rate of cancer incidence at time $t$ is roughly proportional to $t^{(k-1)(d+1)}$.  This is different from the power laws obtained by Armitage and Doll \cite{armdoll} in a non-spatial setting.

\bigskip
\noindent {\bf Case 3}: ${\displaystyle \mu \asymp \frac{\alpha}{N^{(d+1)/d}}}$
\bigskip

In this case the time at which the first mutation appears, the time at which the regions with one mutation spread to a significant fraction of the space, the time at which a second mutation appears, the time at which the regions with two mutations spread to a significant fraction of the space, and the time at which a third mutation appears are all the same order of magnitude.  That is, at the time the third mutation appears, there are already large regions with two mutations inside large regions with type 1 mutations, and these regions may overlap.  We have
\begin{equation}\label{3Xeq}
N \mu \sigma_k \Rightarrow X,
\end{equation}
where $X$ is a nondegenerate random variable.  This result is a special case of Theorem \ref{k3case3} below, and is similar to Case 11 when $k = 2$.  We are unable to describe completely the distribution of $X$, but Propositions \ref{distprop1} and \ref{distprop2} provide some information about the distribution.

\section{Proofs of Limit Theorems}\label{proofsec}

We first introduce some notation.  Denote the $d$-dimensional torus of side-length $L$ by ${\cal T} = [0, L]^d$.  For real numbers $x, y \in [0,L]$, define their distance as $$d_L(x,y) = \min\{|x - y|, L - |x - y|\}.$$  For points $x, y \in {\cal T}$, we write $x = (x^1, \dots, x^d)$ and $y = (y^1, \dots, y^d)$ and define their distance as $$|x - y|^2 = \sum_{i=1}^d d_L(x^i, y^i)^2.$$  Denote a ball of radius $r$ centered at $x$ by $B_x(r)$.  For a set $A \subset {\cal T}$, denote its Lebesgue measure by $|A|$.  Let $N = L^d = |{\cal T}|$ be the volume of the torus.

For each $x \in {\cal T}$ and $t \geq 0$, denote the type of that space-time location by $T(x,t)$.  For $i \in \N_0$, define the set of type $i$ sites by $$\chi_i(t) = \{x \in {\cal T}: T(x,t) = i\}$$ and the set of sites whose type is greater than or equal to $i$ by $$\psi_i(t) = \{x \in {\cal T}: T(x,t) \geq i\}.$$  Let $X_i(t) = |\chi_i(t)|$ denote the total volume of type $i$ sites at time $t$, and let $Y_i(t) = |\psi_i(t)|$ denote the total volume of sites at time $t$ whose type is greater than or equal to $i$.

It will be useful to construct our whole process from a sequence of independent Poisson point processes.  Let $(\Pi_k)_{k=1}^{\infty}$ be a sequence of independent Poisson point processes on ${\cal T} \times [0, \infty)$ such that $\Pi_k$ has constant intensity $\mu_k$.  The points of $\Pi_k$ represent the space-time points at which an individual can acquire a $k$th mutation.  More specifically, if $(x, t)$ is a point of $\Pi_k$ and $x \in \chi_{k-1}(t)$, then we say that the individual at site $x$ mutates to type $k$ at time $t$.  The type $k$ individuals then spread outward at rate $\alpha$.  For example, if
$\Pi_1 \cap ({\cal T} \times [0, t]) = \{(x_1, t_1), \dots (x_k, t_k)\}$, we have $$\psi_1(t) = \bigcup_{j=1}^k B_{x_j}(\alpha(t - t_j)).$$  Let
\begin{equation}\label{Rxtdef}
R(x,t) = \{(y, s) \in {\cal T} \times [0, t]: x \in B_y(\alpha(t-s))\}.
\end{equation}
Note that if a mutation occurs at a space-time location $(y, s) \in R(x,t)$, then the mutation will spread to the site $x$ by time $t$.  Therefore, we have $x \in \psi_1(t)$ if and only if $\Pi_1 \cap R(x,t) \neq \emptyset$.  More generally, for $k \geq 1$, we have $x \in \psi_k(t)$ if and only if there is a point $(y, s)$ of $\Pi_k$ such that $(y, s) \in R(x,t)$ and $y \in \psi_{k-1}(s)$.  Note that this claim would still hold if we replaced the condition $y \in \psi_{k-1}(s)$ by the condition $y \in \chi_{k-1}(s)$.  However, it will be more convenient to work with $\psi_{k-1}(s)$ because with this construction, for all $k \geq 2$, the random set $\psi_{k-1}(s)$ is completely determined by the Poisson processes $\Pi_1, \dots, \Pi_{k-1}$.

\subsection{Cases 1, 2, 3, and 10 for $k=2$, and Case 1 for $k \geq 3$}

In this subsection, we establish the results in the cases when it takes longer for mutations of a given type to appear than it does for them to fixate, in which case the time to wait for $k$ mutations is well approximated by a sum of independent exponentially distributed waiting times.  We will prove the following theorem, which includes Cases 1, 2, and 3 when $k = 2$ and Case 1 when $k = 3$.

\begin{Theo}\label{expwait}
Suppose $\mu_i \ll \alpha/ N^{(d+1)/d}$ for $i \in \{1, \dots, k-1\}$.  Suppose there exists $j \in \{1, \dots, k\}$ such that $\mu_j \ll \alpha/N^{(d-1)/d}$ and $$\frac{\mu_i}{\mu_j} \rightarrow c_i \in (0, \infty] \hspace{.5in}\mbox{for all } i \in \{1, \dots, k\}.$$  Let $W_1, \dots, W_k$ are independent random variables such that $W_i$ has an exponential distribution with rate parameter $c_i$ if $c_i < \infty$ and $W_i = 0$ if $c_i = \infty$.  Then $$N \mu_j \sigma_k \Rightarrow W_1 + \dots + W_k.$$ 
\end{Theo}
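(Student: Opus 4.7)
My plan is to sandwich $\sigma_k$ between two sums of independent exponential random variables with rates $N\mu_1,\dots,N\mu_k$, and then rescale by $N\mu_j$. I will work with the Poisson process construction of Section~\ref{proofsec} and set $\sigma_0 := 0$. First I observe that the hypotheses actually force the stronger bound $\mu_j \ll \alpha/N^{(d+1)/d}$: if $j<k$ this is immediate, while if $j=k$ the requirement $c_{k-1} \in (0,\infty]$ combined with $\mu_{k-1} \ll \alpha/N^{(d+1)/d}$ gives $\mu_k = O(\mu_{k-1}) \ll \alpha/N^{(d+1)/d}$ (the case $k=1$ is trivial, since $\sigma_1 \sim \textup{Exponential}(N\mu_1)$ exactly). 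I will use this stronger bound below.

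For the lower bound, let $E_i$ be the waiting time after $\sigma_{i-1}$ for the next point of $\Pi_i$ to land anywhere in $\mathcal{T}$. Because $\sigma_{i-1}$ is a stopping time for the filtration generated by $\Pi_1,\dots,\Pi_{i-1}$, which is independent of $\Pi_i$, the strong Markov property for Poisson processes shows that $E_1,\dots,E_k$ are independent with $E_i \sim \textup{Exponential}(N\mu_i)$. Since $\psi_{i-1}(s) \subseteq \mathcal{T}$, any point of $\Pi_i$ triggering $\sigma_i$ must occur no earlier than the first point of $\Pi_i$ anywhere in $\mathcal{T}$ after $\sigma_{i-1}$; thus $\sigma_i \geq \sigma_{i-1} + E_i$, and telescoping gives $\sigma_k \geq E_1 + \cdots + E_k$.

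For the upper bound, set $D = \sqrt{d}\,L/(2\alpha)$. The diameter of $\mathcal{T}$ is at most $\sqrt{d}\,L/2$, so by time $\sigma_{i-1}+D$ the ball of radius $\alpha D$ about the location of the first $(i-1)$-mutation covers $\mathcal{T}$; by monotonicity, $\psi_{i-1}(s) = \mathcal{T}$ for every $s \geq \sigma_{i-1}+D$. Let $\hat E_i$ be the waiting time after $\sigma_{i-1}+D$ for the next point of $\Pi_i$ in $\mathcal{T}$; the same stopping-time argument makes $\hat E_1,\dots,\hat E_k$ independent with $\hat E_i \sim \textup{Exponential}(N\mu_i)$. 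Any such point automatically triggers $\sigma_i$, so $\sigma_i \leq \sigma_{i-1} + D + \hat E_i$, and inductively $\sigma_k \leq kD + \hat E_1 + \cdots + \hat E_k$.

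Multiplying by $N\mu_j$ yields $N\mu_j\sum_{i=1}^k E_i \leq N\mu_j\sigma_k \leq kDN\mu_j + N\mu_j\sum_{i=1}^k \hat E_i$. The deterministic error satisfies $kDN\mu_j = \tfrac{k\sqrt{d}}{2}\,N^{(d+1)/d}\mu_j/\alpha \to 0$ by the observation above. For each exponential, $N\mu_j E_i \sim \textup{Exponential}(\mu_i/\mu_j)$, and $\mu_i/\mu_j \to c_i$ implies $N\mu_j E_i \Rightarrow W_i$ (taking $\textup{Exponential}(\infty)$ to mean the point mass at $0$); by independence, $N\mu_j \sum_i E_i \Rightarrow W_1 + \cdots + W_k$, and likewise $N\mu_j \sum_i \hat E_i \Rightarrow W_1 + \cdots + W_k$. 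A standard sandwich argument then delivers $N\mu_j\sigma_k \Rightarrow W_1 + \cdots + W_k$. The only real subtlety is verifying the joint independence of $E_1,\dots,E_k$ (and similarly of the $\hat E_i$) across $i$; this reduces, via the mutual independence of $\Pi_1,\dots,\Pi_k$, to the fact that the conditional distribution of $E_i$ given $\sigma_{i-1}$ does not depend on the value of $\sigma_{i-1}$.
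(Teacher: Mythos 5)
Your proof is correct, and it rests on the same two ingredients as the paper's argument --- the deterministic fixation bound $\sqrt{d}\,N^{1/d}/(2\alpha)$ on the time for a mutation to cover the torus, and the fact that the waiting time for the next point of $\Pi_i$ after a random time determined by $\Pi_1,\dots,\Pi_{i-1}$ is an $\textup{Exponential}(N\mu_i)$ variable independent of the earlier waiting times --- but it organizes them differently, and the difference buys something. The paper conditions on the event $A_2\cap\dots\cap A_k$ that no type-$i$ mutation arrives between the appearance and the fixation of type $i-1$; on that event $\sigma_k$ decomposes exactly as $\sum_{i=1}^k(\hat{\sigma}_i - t_{i-1}) + \sum_{i=1}^{k-1}\hat{t}_i$, and the paper must then check $P(A_i)\to 1$ and, in a separate final paragraph, patch the argument when $\mu_k$ is not $\ll \alpha/N^{(d+1)/d}$ (possible when $j\neq k$ and $c_k=\infty$), since $A_k$ may then fail. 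Your sandwich $\sum_{i=1}^k E_i \leq \sigma_k \leq kD + \sum_{i=1}^k \hat{E}_i$ replaces the exact decomposition on a good event by two unconditional bounds, so no good event is needed and the fast-final-mutation case is absorbed automatically: the deterministic error $kDN\mu_j = (k\sqrt{d}/2)\,\mu_j N^{(d+1)/d}/\alpha$ only requires $\mu_j \ll \alpha/N^{(d+1)/d}$, which, as you correctly derive, follows from the stated hypotheses (immediately if $j<k$; via $\mu_k = O(\mu_{k-1})$ if $j=k\geq 2$; trivially if $k=1$). That derivation is worth making explicit, since the theorem as stated only assumes $\mu_j \ll \alpha/N^{(d-1)/d}$ while the paper's proof invokes $\mu_j \ll \alpha/N^{(d+1)/d}$ without comment. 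The independence point you flag as the one subtlety is handled exactly as in the paper (the random reference time is measurable with respect to $\Pi_1,\dots,\Pi_{i-1}$, hence independent of $\Pi_i$), and the price of your route --- proving two bounds instead of one identity --- is negligible. A cosmetic remark: since $\psi_0(s)=\mathcal{T}$ for all $s\geq 0$, the upper bound improves to $(k-1)D + \sum_{i=1}^k\hat{E}_i$, though this does not affect the limit.
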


Note that if $k = 2$ and the conditions of Case 1 are satisfied, then we take $j = 1$ and get $c_1 = 1$ and $c_2 = \infty$.  This leads to the result that $N \mu_1 \sigma_2 \Rightarrow W_1$, which is the result of Case 1.  If $k = 2$ and the conditions of Case 2 are satisfied, then we take $j = 2$ and see that $c_1 = \infty$ and $c_2 = 1$.  It follows that $N \mu_2 \sigma_2 \Rightarrow W_2$, which is the result for Case 2.  If $k = 2$ and the conditions of Case 3 are satisfied, then we take $j = 1$, which implies that $c_1 = 1$ and $c_2 = c$.  Then we have $N \mu_1 \sigma_2 \Rightarrow W_1 + W_2$, matching the result for Case 3.  Finally, suppose $k \geq 3$ and the conditions of Case 1 are satisfied, so that $\mu_i = \mu$ for all $i$.  Then, for any choice of $j$, we have $c_i = 1$ for all $i \in \{1, \dots, k\}$.  This leads to the result that $N \mu \sigma_k \Rightarrow W_1 + \dots + W_k$, where each $W_i$ has an exponential distribution with rate $1$ and thus $W_1 + \dots + W_k$ has a Gamma$(k,1)$ distribution, confirming the result that we previously claimed.

\begin{proof}
Set $t_0 = 0$, and for $i \geq 1$, let
\begin{equation}\label{tidef}
t_i = \inf\{t > 0: Y_i(t) = N\}
\end{equation}
be the first time at which all individuals have type $i$ or higher.  Define the time elapsed between the first appearance of a type $i$ individual and the time at which all individuals have type $i$ or higher as
\begin{equation}\label{tihatdef}
\hat{t}_i=t_i-\sigma_i.
\end{equation}
For $i \geq 2$, let $A_i$ be the event that $\Pi_i \cap ({\cal T} \times [\sigma_{i-1}, t_{i-1}]) = \emptyset$.  On the event $A_i$, no individual acquires an $i$th mutation before the entire population has type $i-1$.  For $i \geq 1$, let $$\hat{\sigma}_i = \inf\{t: \Pi_i \cap ({\cal T} \times [t_{i-1}, t]) \neq \emptyset\},$$ which is the first time, after time $t_{i-1}$, that there is a potential mutation to type $i$.  We have $\sigma_i = \hat{\sigma}_i$ on the event $A_i$.  In particular, on the event $A_2 \cap \dots \cap A_k$, we have
\begin{equation}\label{sigmadecompose}
\sigma_k = \sum_{i=1}^k (\hat{\sigma}_i - t_{i-1}) + \sum_{i=1}^{k-1} \hat{t}_i.
\end{equation}
Here ${\hat t}_i$ is the time required for the $i$th mutation to spread to the entire population once it appears, and $\hat{\sigma}_i - t_{i-1}$ is the waiting time for the $i$th mutation to appear once $i-1$ mutations have fixated in the population.

For $x, y \in {\cal T}$, we have $|x - y| \leq \frac{\sqrt{d}}{2}L$, which implies that
\begin{equation}\label{hatbound}
\hat{t}_i \leq \frac{\sqrt{d}N^{1/d}}{2 \alpha}.
\end{equation}
In particular, because $\mu_j \ll \alpha/N^{(d+1)/d}$, we have
\begin{equation}\label{gapstozero}
N \mu_j \sum_{i=1}^{k-1} \hat{t}_i \rightarrow 0.
\end{equation}
Moreover, because the Poisson process $\Pi_i$ has constant rate $\mu_i$, and the random time $t_{i-1}$ depends only on $\Pi_1, \dots, \Pi_{i-1}$ and thus is independent of $\Pi_i$, the times $\hat{\sigma}_1 - t_0$, $\hat{\sigma}_2 - t_1$, \dots, $\hat{\sigma}_k - t_{k-1}$ are independent, and $\hat{\sigma}_i - t_{i-1}$ has an exponential distribution with rate $N \mu_i$.  Because $\mu_i/\mu_j \rightarrow c_i$ by assumption, it follows that $N \mu_j (\hat{\sigma}_i - t_{i-1}) \Rightarrow W_i$ for all $i \in \{1, \dots, k\}$.  Combining this result with (\ref{gapstozero}) and the independence of the random variables $\hat{\sigma}_i - t_{i-1}$, we have $$N \mu_j \left( \sum_{i=1}^k (\hat{\sigma}_i - t_{i-1}) + \sum_{i=1}^{k-1} \hat{t}_i \right) \: \Rightarrow W_1 + \dots + W_k.$$

In view of (\ref{sigmadecompose}), the statement of the theorem will follow if we show that $P(A_2 \cap \dots \cap A_k) \rightarrow 1$.  Because the Poisson point process $\Pi_i$ has constant rate $\mu_i$, it follows from (\ref{hatbound}) that $$P(A_i) \geq \exp \bigg(- N \mu_i \cdot \frac{ \sqrt{d} N^{1/d}}{2 \alpha} \bigg) \geq 1 - \frac{\sqrt{d} \mu_i N^{(d+1)/d}}{2 \alpha}.$$  Because $\mu_i \ll \alpha/N^{(d+1)/d}$ for $i \in \{1, \dots, k-1\}$ by assumption, it follows that $P(A_i) \rightarrow 1$ for $i \in \{2, \dots, k-1\}$.  If we also have $\mu_k \ll \alpha/N^{(d+1)/d}$, then $P(A_k) \rightarrow 1$ as well, and the proof is complete.

On the other hand, suppose we do not have $\mu_k \ll \alpha/N^{(d+1)/d}$.  Then the argument needs to be adjusted because the $k$th and final mutation may happen faster than the others, and in particular may occur between times $\sigma_{k-1}$ and $t_{k-1}$.  In this case, we have $j \neq k$ and $\mu_k/\mu_j \rightarrow \infty$, which means $W_k = 0$.  In place of (\ref{sigmadecompose}), on the event $A_2 \cap \dots \cap A_{k-1}$, we can write
$$\sigma_k = (\sigma_k - \sigma_{k-1}) + \sum_{i=1}^{k-1} (\hat{\sigma}_i - t_{i-1}) + \sum_{i=1}^{k-2} \hat{t}_i,$$
and we know from the argument given above that $$N \mu_j \left( \sum_{i=1}^{k-1} (\hat{\sigma}_i - t_{i-1}) + \sum_{i=1}^{k-2} \hat{t}_i \right) \: \Rightarrow W_1 + \dots + W_{k-1}.$$  Furthermore, we still have $$\sigma_k - \sigma_{k-1} \leq \hat{t}_{k-1} + (\hat{\sigma}_k - t_{k-1}).$$  We have $N \mu_j \hat{t}_{k-1} \rightarrow 0$, and because $N \mu_j (\hat{\sigma}_k - t_{k-1})$ has an exponential distribution with rate $\mu_k/\mu_j \rightarrow \infty$, we have $N \mu_j (\hat{\sigma}_k - t_{k-1}) \rightarrow 0$ in probability, which now implies the conclusion of the theorem.
\end{proof}

The following theorem establishes the result for Case 10 when $k = 2$.

\begin{Theo}\label{eq:Sig2Case10}
Assume that the assumptions for Case 10 hold. Then for $t>0$,
$$
\lim_{N\to\infty}P(N\mu_2\sigma_2>t)=e^{-t}.
$$
\end{Theo}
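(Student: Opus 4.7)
The structure of the proof will closely mirror the decomposition used in the proof of Theorem \ref{expwait}, but exploiting that in Case 10 the dominant waiting time is the wait for the second mutation after the first has fixated. The plan is to show that the first mutation fixates before any second mutation arises (with probability tending to $1$), and that on the time scale $1/(N \mu_2)$ the fixation time $t_1$ is negligible.

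First I would recall from the proof of Theorem \ref{expwait} the quantities $t_1 = \inf\{t : Y_1(t) = N\}$, $\hat{t}_1 = t_1 - \sigma_1$, $\hat{\sigma}_2 = \inf\{t : \Pi_2 \cap ({\cal T} \times [t_1, t]) \neq \emptyset\}$, and the event $A_2 = \{\Pi_2 \cap ({\cal T} \times [\sigma_1, t_1]) = \emptyset\}$, on which $\sigma_2 = \hat{\sigma}_2$. The deterministic bound $\hat{t}_1 \leq \sqrt{d} N^{1/d}/(2 \alpha)$ from (\ref{hatbound}) still applies, and the key Case 10 input is that $\mu_2 \ll \alpha/N^{(d+1)/d}$, which follows from $\mu_2 \ll (N \mu_1)^{d+1}/\alpha^d$ combined with $\mu_1 \asymp \alpha/N^{(d+1)/d}$.

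Next I would show $P(A_2) \to 1$. Conditioning on $\Pi_1$ (which determines $\sigma_1$ and $t_1$), the expected number of points of $\Pi_2$ in ${\cal T} \times [\sigma_1, t_1]$ is $N \mu_2 \hat{t}_1 \leq \sqrt{d} \mu_2 N^{(d+1)/d}/(2\alpha) \to 0$, and Markov's inequality yields $P(A_2^c) \to 0$. Then I would show that $N \mu_2 t_1 \to 0$ in probability: writing $t_1 = \sigma_1 + \hat{t}_1$, the bound above gives $N \mu_2 \hat{t}_1 \to 0$ deterministically, while $\sigma_1 \sim \textup{Exponential}(N \mu_1)$ implies $N \mu_2 \sigma_1 = (\mu_2/\mu_1) \cdot (N \mu_1 \sigma_1)$, which tends to $0$ in probability because $\mu_2/\mu_1 \to 0$ in Case 10 and $N \mu_1 \sigma_1$ is bounded in probability.

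Finally, since $t_1$ depends only on $\Pi_1$, it is independent of $\Pi_2$, so $\hat{\sigma}_2 - t_1$ has an $\textup{Exponential}(N \mu_2)$ distribution independent of $t_1$, and on $A_2$ we have
\[
N \mu_2 \sigma_2 = N \mu_2 t_1 + N \mu_2 (\hat{\sigma}_2 - t_1).
\]
The second term is exactly $\textup{Exponential}(1)$, the first term tends to $0$ in probability, and $P(A_2) \to 1$, so Slutsky's theorem gives $N \mu_2 \sigma_2 \Rightarrow W$ with $W \sim \textup{Exponential}(1)$. There is no real obstacle here beyond the routine verification of the two estimates $P(A_2) \to 1$ and $N \mu_2 t_1 \to 0$; the only subtle point is checking that the scaling conditions defining Case 10 imply $\mu_2 \ll \alpha/N^{(d+1)/d}$ and $\mu_2 \ll \mu_1$, both of which are immediate from $\mu_1 \asymp \alpha/N^{(d+1)/d}$.
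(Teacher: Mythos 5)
Your proof is correct. It shares the paper's overall structure --- decompose $\sigma_2$ around the fixation time $t_1$, show $N\mu_2 t_1 \to 0$ in probability via the identical estimates ($\mu_2 \ll \mu_1$ gives $N\mu_2\sigma_1 \to 0$, and $N\mu_2 \hat{t}_1 \leq \sqrt{d}\,\mu_2 N^{(d+1)/d}/(2\alpha) \asymp \mu_2/\mu_1 \to 0$) --- but handles the residual waiting time $\sigma_2 - t_1$ differently. You import the event $A_2 = \{\Pi_2 \cap ({\cal T} \times [\sigma_1, t_1]) = \emptyset\}$ and the variable $\hat{\sigma}_2$ from the proof of Theorem \ref{expwait}, show $P(A_2) \to 1$ by a first-moment bound, and conclude by Slutsky since $N\mu_2(\hat{\sigma}_2 - t_1)$ is exactly Exponential$(1)$ and independent of $t_1$. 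The paper never introduces $A_2$: it writes the exact conditional survival probability, $P(N\mu_2(\sigma_2 - t_1) > t) = e^{-t} E[\exp(-\mu_2 \int_0^{t_1} Y_1(s)\, ds)]$, using $Y_1(s) = N$ for $s \geq t_1$, and shows the remaining factor tends to $1$ by dominated convergence because the exponent is bounded by $N\mu_2 t_1$. The two routes are of comparable length; yours reuses machinery already built for Theorem \ref{expwait} and exhibits the limit as an honest Exponential$(1)$ random variable plus a vanishing term, while the paper's absorbs the possibility of an early second mutation into a single integral bound without any event-splitting. There is no gap in your argument.
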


\begin{proof}
Define $t_1$ and $\hat{t}_1$ as in (\ref{tidef}) and (\ref{tihatdef}).  Write
$$
N\mu_2\sigma_2=N\mu_2t_1+N\mu_2(\sigma_2-t_1),
$$
and recall that $t_1=\sigma_1+\hat{t}_1$. We first establish that as $N\to\infty$, $N\mu_2t_1\to 0$.  Because
$$
\frac{\mu_2}{\mu_1}\ll N\left(\frac{N\mu_1}{\alpha}\right)^d\asymp 1,
$$
we have $\mu_2 \ll \mu_1$, and therefore $N\mu_2\sigma_1\to 0$.  Also, under the assumptions of Case 10,
$$
N \mu_2 \hat{t}_1 \leq \frac{\sqrt{d} N^{(d+1)/d} \mu_2}{2 \alpha} \asymp\frac{\mu_2}{\mu_1}\ll 1.
$$
We can thus conclude that as $N\to\infty$, $N\mu_2t_1\to 0$.

It thus remains to find the limit of $P(N\mu_2(\sigma_2-t_1)>t).$
Note that because $Y_1(s) = N$ for $s \geq t_1$, we have
\begin{align*}
P\left(N\mu_2(\sigma_2-t_1)>t\right)&=E\left[\exp\left(-\mu_2\int_0^{t_1}Y_1(s)ds-\mu_2\int_{t_1}^{t_1+t/(N\mu_2)}Y_1(s)ds\right)\right] \\
&=
e^{-t}E\left[ \exp\left(-\mu_2\int_0^{t_1}Y_1(s)ds\right)\right] .
\end{align*}
Since $0\leq\mu_2\int_0^{t_1}Y_1(s)ds\leq N \mu_2 t_1,$ we know that as $N\to\infty$, $\mu_2\int_0^{t_1}Y_1(s)ds\to 0$.  Thus, by the dominated convergence theorem,
$$
\lim_{N\to\infty}E\left[\exp\left(-\mu_2\int_0^{t_1}Y_1(s)ds\right)\right] = 1.
$$
The result follows.
\end{proof}

\subsection{Proof for Cases 6, 7 and 8 when $k=2$ and Case 2 when $k\geq 3$}

We begin with a simple first moment result.

\begin{Lemma}\label{meanY}
If $0 \leq t \leq N^{1/d}/(2 \alpha)$, then $$E[Y_1(t)] = N \bigg( 1 - \exp \bigg( - \frac{\mu_1 \gamma_d \alpha^d t^{d+1}}{d+1} \bigg) \bigg).$$
\end{Lemma}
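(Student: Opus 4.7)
The plan is to use Fubini together with the Poisson description of $\psi_1(t)$ built into the construction. Writing
\[
E[Y_1(t)] = E\bigg[\int_{\mathcal{T}} \1_{\{x \in \psi_1(t)\}} \: dx\bigg] = \int_{\mathcal{T}} P(x \in \psi_1(t)) \: dx,
\]
the task reduces to computing $P(x \in \psi_1(t))$ for a fixed $x \in \mathcal{T}$. By the construction recalled before (\ref{Rxtdef}), $x \in \psi_1(t)$ if and only if $\Pi_1 \cap R(x,t) \neq \emptyset$, where $R(x,t) = \{(y,s) \in \mathcal{T} \times [0,t]: x \in B_y(\alpha(t-s))\}$. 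Since $\Pi_1$ is Poisson of constant intensity $\mu_1$, this probability is $1 - \exp(-\mu_1 |R(x,t)|)$, so the whole problem boils down to evaluating the Lebesgue measure $|R(x,t)|$.

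The next step is to rewrite $R(x,t)$ by the symmetry of the torus metric: $x \in B_y(\alpha(t-s))$ iff $y \in B_x(\alpha(t-s))$, so
\[
|R(x,t)| = \int_0^t |B_x(\alpha(t-s))| \: ds.
\]
Here is the only place where the hypothesis $t \leq N^{1/d}/(2\alpha)$ enters, and it is the main (if small) obstacle to flag: for each $s \in [0,t]$, the radius $\alpha(t-s)$ is at most $L/2 = N^{1/d}/2$, so the torus-ball $B_x(\alpha(t-s))$ does not wrap around itself (each coordinate projection uses only the $|x^i-y^i| \leq L/2$ branch of $d_L$), and hence has the same volume $\gamma_d (\alpha(t-s))^d$ as its Euclidean counterpart. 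Without this hypothesis the formula would pick up a wrap-around correction and the clean closed form would fail.

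Substituting this volume and integrating gives
\[
|R(x,t)| = \gamma_d \alpha^d \int_0^t (t-s)^d \: ds = \frac{\gamma_d \alpha^d t^{d+1}}{d+1}.
\]
Since this is independent of $x$, combining with the Poisson computation yields
\[
P(x \in \psi_1(t)) = 1 - \exp\bigg( - \frac{\mu_1 \gamma_d \alpha^d t^{d+1}}{d+1} \bigg),
\]
and multiplying by $|\mathcal{T}| = N$ gives the claim. The entire argument is one line of Fubini plus one geometric fact about small balls on the torus.
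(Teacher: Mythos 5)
Your proposal is correct and follows essentially the same route as the paper: Fubini plus spatial homogeneity reduces the claim to $P(0\in\psi_1(t))=1-e^{-\mu_1|R(0,t)|}$, and the hypothesis $t\le N^{1/d}/(2\alpha)$ is used exactly as you say, to ensure the torus ball has Euclidean volume $\gamma_d(\alpha(t-s))^d$. No gaps.
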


\begin{proof}
Recall that $0 \in \psi_1(t)$ if and only if $\Pi_1 \cap R(0,t) \neq \emptyset$, where $R(0,t)$ was defined in (\ref{Rxtdef}). Using also the spatial homogeneity of the torus, we have
$$E[Y_1(t)] = E \bigg[ \int_{{\cal T}} \1_{\{x \in \psi_1(t)\}} \: dx \bigg] = N P(0 \in \psi_1(t)) = N(1 - e^{-\mu_1 |R(0,t)|}).$$
When $0 \leq t \leq N^{1/d}/(2 \alpha)$, we have $\alpha t \leq L/2$, and therefore a ball of radius $\alpha t$ in the torus of side length $L$ has the same volume as a ball of the same radius in $\R^d$.  Therefore,
$$|R(0,t)| = \int_0^t |B_0(\alpha(t-s))| \: ds = \int_0^t \gamma_d \alpha^d (t-s)^d \: ds = \frac{\gamma_d \alpha^d t^{d+1}}{d+1},$$ and the result follows.  
\end{proof}

When $k \geq 2$, we are not able to obtain an exact formula for $E[Y_k(t)]$.  However, we are able to obtain useful estimates.  Define
$$y_k(t) = \frac{v_k(t)}{N} = \frac{\gamma_d^k (d!)^k \alpha^{kd}}{(k(d+1))!} \bigg( \prod_{i=1}^k \mu_i \bigg) t^{k(d+1)}.$$
The next lemma gives an upper bound for $E[Y_k(t)]$.

\begin{Lemma}\label{lemma:Occupancy}
For $t>0$ and integers $k \geq 0$, we have $P(0\in \psi_k(t))\leq y_k(t)$ and $E[Y_k(t)]\leq v_k(t).$
\end{Lemma}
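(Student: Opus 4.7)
The bound on $E[Y_k(t)]$ will follow at once from the pointwise bound: by spatial homogeneity of the torus, $E[Y_k(t)] = \int_{\cal T} P(x \in \psi_k(t))\,dx = N \cdot P(0 \in \psi_k(t))$, so once the inequality $P(0 \in \psi_k(t)) \leq y_k(t)$ is established, multiplying by $N$ gives $E[Y_k(t)] \leq N y_k(t) = v_k(t)$. I would therefore focus on the first inequality and prove it by induction on $k$. The base case $k = 0$ holds trivially: $\psi_0(t) = {\cal T}$ so the left side equals $1$, while $v_0(t) = N$ gives $y_0(t) = 1$.

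For the inductive step, the plan is to exploit the characterization stated just before the lemma: $0 \in \psi_k(t)$ if and only if $\Pi_k$ has a point $(y, s) \in R(0, t)$ with $y \in \psi_{k-1}(s)$. Since $\psi_{k-1}(s)$ is determined by $\Pi_1, \dots, \Pi_{k-1}$ and hence independent of $\Pi_k$, I would condition on $(\Pi_1, \dots, \Pi_{k-1})$, at which point $\Pi_k$ restricted to the relevant region is a Poisson process of known intensity. The void probability formula together with the inequality $1 - e^{-x} \leq x$ yields
$$P(0 \in \psi_k(t) \mid \Pi_1, \dots, \Pi_{k-1}) \leq \mu_k \int_0^t \int_{B_0(\alpha(t-s))} \1_{\{y \in \psi_{k-1}(s)\}} \, dy \, ds,$$
where I have used that $(y, s) \in R(0, t)$ is equivalent to $s \in [0, t]$ and $y \in B_0(\alpha(t-s))$.

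Taking expectations, swapping integrals via Fubini, and invoking spatial homogeneity to replace $P(y \in \psi_{k-1}(s))$ by $P(0 \in \psi_{k-1}(s))$, the right-hand side becomes $\mu_k \int_0^t |B_0(\alpha(t-s))| \cdot P(0 \in \psi_{k-1}(s))\,ds$. The inductive hypothesis bounds the second factor by $y_{k-1}(s)$, and the torus ball volume $|B_0(r)|$ is at most $\gamma_d r^d$ (they agree when $r \leq L/2$, and in general the torus ball is the image of the Euclidean ball under projection from $\R^d$, so the projection can only decrease volume). Combining these estimates yields
$$P(0 \in \psi_k(t)) \leq \mu_k \int_0^t \gamma_d \alpha^d (t - s)^d \, y_{k-1}(s) \, ds,$$
and dividing the defining recursion (\ref{vkpre}) by $N$ shows the right side equals exactly $y_k(t)$, closing the induction. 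I expect no substantive obstacle: the only delicate point is that $B_0(r)$ lives on a torus of side $L$ rather than in $\R^d$, but since passage to the torus only decreases volume, this bound goes in the direction we need rather than against us.
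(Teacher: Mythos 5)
Your proposal is correct and follows essentially the same route as the paper: condition on $\Pi_1,\dots,\Pi_{k-1}$, apply the void-probability formula with $1-e^{-x}\leq x$, use spatial homogeneity, and close the induction on $P(0\in\psi_k(t))\leq y_k(t)$ before multiplying by $N$. The only (harmless) differences are that you close the induction by citing the recursion (\ref{vkpre}) rather than evaluating the Beta integral explicitly, and you are slightly more careful than the paper in noting that the torus ball volume is merely bounded above by $\gamma_d r^d$, which is the direction the inequality needs.
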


\begin{proof}
Recall that $0 \in \psi_k(t)$ if and only if there is a point of the Poisson process $\Pi_k$ in the set $R(0,t) \cap \{(x,s): x \in \psi_{k-1}(s)\}$.  For $k \geq 1$, define
\begin{equation}\label{Lambdadef}
\Lambda_{k-1}(t)=\mu_k \int \int_{R(0,t)}\1_{\{x \in \psi_{k-1}(s)\}} \: dx \: ds.
\end{equation}
Conditional on $\Pi_1, \dots, \Pi_{k-1}$, the number of points of $\Pi_k$ in the set $R(0,t) \cap \{(x,s): x \in \psi_{k-1}(s)\}$ has the Poisson distribution with mean $\Lambda_{k-1}(t)$.  Note that for this claim to hold, it is important to work with $\psi_{k-1}(s)$ rather than $\chi_{k-1}(s)$ because $\psi_{k-1}(s)$ depends only on $\Pi_1, \dots, \Pi_{k-1}$ and therefore is independent of $\Pi_k$.  Therefore, using the spatial homogeneity of the process,
\begin{align}
\label{eq:OccupancyIneq}
P\left(0\in \psi_k(t)\right) &= E\left[1-e^{-\Lambda_{k-1}(t)}\right] \nonumber \\
&\leq E\left[\Lambda_{k-1}(t)\right]\nonumber\\
&= \mu_k \int_0^t \int_{B_0(\alpha(t-s))}P\left(x\in\psi_{k-1}(s)\right) \: dx \:ds\nonumber\\
&= \mu_k \int_0^t P(0\in\psi_{k-1}(s)) |B_0(\alpha(t-s))| \: ds\nonumber\\
&= \mu_k \gamma_d \alpha^d \int_0^t P(0\in\psi_{k-1}(s)) (t - s)^d \: ds.
\end{align}

We now prove the upper bound on $P(0 \in \psi_k(t))$ by induction.  Because $y_0(t) = 1$ for all $t$, the $k = 0$ case is trivial.  Let $k \geq 1$, and suppose $P(0 \in \psi_{k-1}(t)) \leq y_{k-1}(t)$ for all $t > 0$.  By (\ref{eq:OccupancyIneq}), for all $t > 0$,
\begin{align*}
P(0 \in \psi_k(t)) &\leq \mu_k \gamma_d \alpha^d \int_0^t \frac{\gamma_d^{k-1} (d!)^{k-1} \alpha^{(k-1)d}}{((k-1)(d+1))!} \bigg( \prod_{i=1}^{k-1} \mu_i \bigg) s^{(k-1)(d+1)} (t-s)^d \: ds \\
&= \bigg( \prod_{i=1}^k \mu_i \bigg) \frac{\gamma_d^k (d!)^{k-1} \alpha^{kd}}{((k-1)(d+1))!} \int_0^t s^{(k-1)(d+1)}(t - s)^d \: ds.
\end{align*}
Making the substitution $u = s/t$ and then using that $\int_0^1 u^a (1-u)^b \: du = a!b!/(a+b+1)!$ for nonnegative integers $a$ and $b$, we get
\begin{equation}\label{indupper}
P(0 \in \psi_k(t)) \leq \bigg( \prod_{i=1}^k \mu_i \bigg) \frac{\gamma_d^k (d!)^{k-1} \alpha^{kd}}{((k-1)(d+1))!} \cdot t^{k(d+1)} \int_0^1 u^{(k-1)(d+1)} (1 - u)^d \: du = y_k(t).
\end{equation}
Thus, by induction, $P(0\in \psi_k(t))\leq y_k(t)$ for all $t > 0$ and all nonnegative integers $k$.
The upper bound for $E[Y_k(t)]$ now follows from the formula
\begin{equation}\label{YfromPsi}
E[Y_k(t)]=\int_{\mathcal{T}}\1_{\{x\in\psi_k(t)\}} \: dx
\end{equation}
and the spatial homogeneity of $\mathcal{T}$.
\end{proof}

When $\mu_j \alpha^d t^{d+1} \rightarrow 0$ for all $j \in \{1, \dots, k\}$, we see that $y_j(t) \rightarrow 0$ for all $j \in \{1, \dots, k\}$, so most individuals have not yet acquired mutations by time $t$.  Also, we have $y_j(t)/y_{j-1}(t) \rightarrow 0$ for all $j \in \{1, \dots, k\}$, which means that among the individuals with at least $j-1$ mutations, only a small fraction will have acquired a $j$th mutation.  As a result, there will not be much overlap in the regions affected by different mutations to type $j-1$.  Under this condition, $E[Y_k(t)]$ can be approximated by $v_k(t)$, as shown below.

\begin{Lemma}\label{thm:no_overlaps}
Fix a positive integer $k$.  Suppose $\mu_j \alpha^d t^{d+1} \rightarrow 0$ as $N \rightarrow \infty$ for all $j \in \{1, \dots, k\}$.  Then $$\lim_{N \rightarrow \infty} \frac{E[Y_k(t)]}{v_k(t)} = 1.$$
\end{Lemma}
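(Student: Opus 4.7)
The plan is to prove the lemma by induction on $k$, strengthened to the uniform statement
\begin{equation*}
\sup_{s \in [0, t]} \bigg| 1 - \frac{P(0 \in \psi_k(s))}{y_k(s)} \bigg| \to 0 \quad \mbox{as } N \to \infty.
\end{equation*}
By the spatial homogeneity identity (\ref{YfromPsi}) we have $E[Y_k(t)] = N \cdot P(0 \in \psi_k(t))$, and $v_k(t) = N y_k(t)$, so specializing the uniform statement to $s = t$ yields the lemma. The upper bound $P(0 \in \psi_k(s)) \leq y_k(s)$ is already provided by Lemma \ref{lemma:Occupancy}; the work is in the matching lower bound. For the base case $k = 1$, under the implicit assumption $\alpha t \leq L/2$ (which holds in the regimes where the lemma is applied, notably Cases 6, 7, 8 for $k = 2$ and Case 2 for $k \geq 3$), Lemma \ref{meanY} gives $P(0 \in \psi_1(s)) = 1 - e^{-y_1(s)}$ exactly; since $(1 - e^{-x})/x$ is decreasing with limit $1$ at $0$ while $y_1(s) \leq y_1(t) \to 0$, the ratio converges to $1$ uniformly on $[0, t]$.

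For the inductive step, I start from the identity $P(0 \in \psi_k(s)) = E[1 - e^{-\Lambda_{k-1}(s)}]$ derived in the proof of Lemma \ref{lemma:Occupancy}, with $\Lambda_{k-1}$ as in (\ref{Lambdadef}). Applying the inequality $1 - e^{-x} \geq x - x^2/2$, it suffices to show that $E[\Lambda_{k-1}(s)] \geq (1 - o(1)) y_k(s)$ and $E[\Lambda_{k-1}(s)^2] = o(y_k(s))$, both uniformly in $s \in [0, t]$. For the first moment, by Fubini and spatial homogeneity,
\begin{equation*}
E[\Lambda_{k-1}(s)] = \mu_k \gamma_d \alpha^d \int_0^s P(0 \in \psi_{k-1}(r))(s - r)^d \, dr \geq \Big( \inf_{r \leq t} \frac{P(0 \in \psi_{k-1}(r))}{y_{k-1}(r)} \Big) \mu_k \gamma_d \alpha^d \int_0^s y_{k-1}(r)(s - r)^d \, dr,
\end{equation*}
and the Beta-integral computation already carried out in (\ref{indupper}) shows that this last integral equals $y_k(s)$, so the uniform induction hypothesis delivers the desired lower bound. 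For the second moment, the trivial bound $P(x \in \psi_{k-1}(r), x' \in \psi_{k-1}(r')) \leq P(x \in \psi_{k-1}(r)) \leq y_{k-1}(r)$ provided by Lemma \ref{lemma:Occupancy} decouples the double integral over $R(0,s)^2$ and gives
\begin{equation*}
E[\Lambda_{k-1}(s)^2] \leq \mu_k^2 \gamma_d^2 \alpha^{2d} \bigg( \int_0^s y_{k-1}(r)(s - r)^d \, dr \bigg) \bigg( \int_0^s (s - r')^d \, dr' \bigg) = y_k(s) \cdot \frac{\mu_k \gamma_d \alpha^d s^{d+1}}{d + 1},
\end{equation*}
which is $o(y_k(s))$ uniformly in $s \leq t$ because $\mu_k \alpha^d s^{d+1} \leq \mu_k \alpha^d t^{d+1} \to 0$ by hypothesis.

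The main obstacle is the need to propagate the lower bound on $P(0 \in \psi_{k-1}(r))/y_{k-1}(r)$ \emph{uniformly} in $r \in [0, s]$, since the first-moment integral weights this ratio against $y_{k-1}(r)(s - r)^d$ across all earlier times and a pointwise bound at $r = s$ would not suffice. This is what forces the induction to be stated in uniform form. Uniformity is delivered for free in the base case by the monotonicity of $(1 - e^{-x})/x$, and the second-moment estimate is itself uniform in $s$, so the inductive step reproduces a uniform bound and the induction closes.
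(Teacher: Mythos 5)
Your proof is correct and follows the same skeleton as the paper's: an induction on $k$ establishing a uniform-in-$s$ lower bound $P(0\in\psi_j(s))\geq(1-o(1))\,y_j(s)$ on $[0,t]$, driven by the inequality $1-e^{-x}\geq x-x^2/2$ applied to $P(0\in\psi_k(s))=E[1-e^{-\Lambda_{k-1}(s)}]$, together with the first-moment identity and a second-moment bound. (The paper phrases the uniformity as $P(0\in\psi_j(s))\geq(1-\eps)^j y_j(s)$ for all $s\in[0,t]$ with $\eps$ fixed in advance, and starts the induction at $j=0$ where the bound is trivial, rather than at $j=1$ via the exact formula of Lemma \ref{meanY}; these are cosmetic differences.) The one step you handle differently is the bound on $E[\Lambda_{k-1}(s)^2]$: the paper splits the double integral according to whether the space-time cones $R(y,s)$ and $R(x,r)$ intersect, uses independence on the disjoint part to recover $E[\Lambda_{k-1}(s)]^2$, and bounds the overlap term by $E[\Lambda_{k-1}(s)]\cdot\mu_k\gamma_d\alpha^d t^{d+1}/(d+1)$; you instead bound the joint probability by a marginal everywhere and decouple the integral, obtaining $E[\Lambda_{k-1}(s)^2]\leq y_k(s)\cdot\mu_k\gamma_d\alpha^d s^{d+1}/(d+1)$ directly. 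Your bound is cruder but entirely sufficient, since the hypothesis $\mu_k\alpha^d t^{d+1}\to 0$ is exactly what makes it $o(y_k(s))$ uniformly, and it has the modest advantage of not invoking the independence of disjoint cones at all. Finally, you are right that the lower-bound direction needs $|B_0(\alpha(t-s))|=\gamma_d\alpha^d(t-s)^d$, hence $\alpha t\leq L/2$; the paper leaves this assumption implicit, and it holds in every regime where the lemma is applied.
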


\begin{proof}
Recall the definition of $\Lambda_{k-1}(t)$ from (\ref{Lambdadef}).
Using that $1 - e^{-x} \geq x - x^2/2$ for $x \geq 0$ and proceeding as in (\ref{eq:OccupancyIneq}), we get
$$P(0 \in \psi_k(t)) = E\left[1-e^{-\Lambda_{k-1}(t)}\right] \geq E[\Lambda_{k-1}(t)] - \frac{1}{2} E\left[\Lambda_{k-1}(t)^2\right].$$
We have
\begin{align*}
E\left[\Lambda_{k-1}(t)^2\right] &= \mu_k^2 E \left[ \bigg( \int_0^t \int_{B_0(\alpha(t - s))} \1_{\{x \in \psi_{k-1}(s)\}} \: dx \: ds \bigg)^2 \right] \\
&=\mu_k^2\int_0^{t}\int_{B_0(\alpha(t-s))}\int_0^{t}\int_{B_0(\alpha(t-r))}P(y\in\psi_{k-1}(s),x\in\psi_{k-1}(r))\: dx \: dr \: dy \: ds.
\end{align*}
Whether or not $y \in \psi_{k-1}(s)$ is determined entirely by the restrictions of the Poisson point processes $\Pi_1, \dots, \Pi_{k-1}$ to the space-time region $R(y, s)$, and likewise for the event $x \in \psi_{k-1}(r)$.  Note that
$$
R(x,r)\cap R(y,s)=\emptyset \Leftrightarrow  |x-y|>\alpha(s+r)\Leftrightarrow x\notin B_y(\alpha(s+r)), 
$$
and thus if $x\notin B_y(\alpha(s+r))$ we have
$$
P\left(y\in \psi_{k-1}(s),x\in\psi_{k-1}(r)\right)=P\left(y\in \psi_{k-1}(s)\right)P\left(x\in\psi_{k-1}(r)\right).
$$
Let $A(s,r,y) = B_0(\alpha(t-r)) \cap B_y(\alpha(s+r))^c$ and $B(s,r,y) = B_0(\alpha(t-r)) \cap B_y(\alpha(s+r))$.  Then
\begin{align*}
E\left[\Lambda_{k-1}(t)^2\right] &= \mu_k^2\int_0^{t}\int_{B_0(\alpha(t-s))}\int_0^{t}\int_{A(s,r,y)}P(y\in\psi_{k-1}(s))P(x\in\psi_{k-1}(r)) \: dx \: dr \: dy \: ds\\
&\hspace{.3in}+ \mu_k^2\int_0^{t}\int_{B_0(\alpha(t-s))}\int_0^{t}\int_{B(s,r,y)}P(y\in\psi_{k-1}(s),x\in\psi_{k-1}(r))\: dx \: dr \: dy \: ds\\
&\leq E[\Lambda_{k-1}(t)]^2+\mu_k^2\int_0^{t}\int_{B_0(\alpha(t-s))}P(y\in\psi_{k-1}(s)) \bigg( \int_0^{t}|B_0(\alpha(t-r))| \: dr \bigg) \: dy \: ds\\
&= E[\Lambda_{k-1}(t)]^2+\mu_k E[\Lambda_{k-1}(t)]  \bigg( \int_0^{t}|B_0(\alpha(t-r))| \: dr \bigg) \\
&= E[\Lambda_{k-1}(t)] \left( E[\Lambda_{k-1}(t)] + \frac{\mu_k \gamma_d \alpha^d t^{d+1}}{d+1} \right) \\
&\leq E[\Lambda_{k-1}(t)] \left( y_k(t) + \frac{\mu_k \gamma_d \alpha^d t^{d+1}}{d+1} \right).
\end{align*}
Let $\eps > 0$.  Because $\mu_k \alpha^d t^{d+1} \rightarrow \infty$ as $N \rightarrow \infty$, it follows that for sufficiently large $N$, we have
$$P(0 \in \psi_k(t)) \geq E[\Lambda_{k-1}(t)] \left(1 - \frac{y_k(t)}{2} - \frac{\mu_k \gamma_d \alpha^d t^{d+1}}{2(d+1)} \right) \geq (1 - \eps) E[\Lambda_{k-1}(t)].$$

From (\ref{eq:OccupancyIneq}), we have
$$E[\Lambda_{k-1}(t)] = \mu_k \gamma_d \alpha^d \int_0^t P(0 \in \psi_{k-1}(s)) (t-s)^d \: ds.$$  We now show by induction that for all $j \in \{0, 1, \dots, k\}$ and all $s \in [0, t]$, we have $$P(0 \in \psi_k(s)) \geq (1 - \eps)^k y_k(s).$$  Because $y_0(s) = 1$ for all $s \in [0, t]$ and $P(0 \in \psi_0(s)) = 1$, the result holds for $j = 0$.  Let $j \geq 1$, and suppose $P(0 \in \psi_{j-1}(s)) \geq (1 - \eps)^{j-1} y_{j-1}(s)$ for all $s \in [0, t]$.  Then for $s \in [0, t]$, using the induction hypothesis and repeating the calculation in the derivation of (\ref{indupper}),
\begin{align*}
P(0 \in \psi_j(s)) &\geq (1 - \eps) E[\Lambda_{j-1}(s)] \\
&\geq (1 - \eps) \mu_j \gamma_d \alpha^d \int_0^s P(0 \in \psi_{j-1}(u)) (s - u)^d \: du \\
&\geq (1 - \eps)^j \mu_j \gamma_d \alpha^d \int_0^s y_{j-1}(u) (s - u)^d \: du \\
&= (1 - \eps)^j y_j(s).
\end{align*}
Because $\eps > 0$ was arbitrary, the result now follows from (\ref{YfromPsi}), the spatial homogeneity of ${\cal T}$, and the upper bound in Lemma \ref{lemma:Occupancy}.
\end{proof}

We next establish a variance bound using the independence of points that have disjoint space-time cones.

\begin{Lemma}\label{varY}
For all $t \geq 0$, and positive integer $k$, we have $$\Var(Y_k(t)) \leq \gamma_d (2 \alpha t)^d E[Y_k(t)].$$
\end{Lemma}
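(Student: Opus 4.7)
The plan is to write $Y_k(t) = \int_{\mathcal{T}} \1_{\{x \in \psi_k(t)\}} \, dx$ and then expand
$$\Var(Y_k(t)) = \int_{\mathcal{T}} \int_{\mathcal{T}} \Cov \left(\1_{\{x \in \psi_k(t)\}}, \1_{\{y \in \psi_k(t)\}}\right) \, dx \, dy,$$
showing that the integrand vanishes unless $|x - y| \leq 2 \alpha t$, and bounding it by $P(x \in \psi_k(t))$ on the remaining set. The desired inequality then falls out by integrating.

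First, I would establish a finite-domain-of-dependence property: the event $\{x \in \psi_k(t)\}$ is measurable with respect to the restrictions of $\Pi_1, \dots, \Pi_k$ to the backward cone $R(x, t)$ of (\ref{Rxtdef}). This goes by induction on $k$, since $x \in \psi_k(t)$ if and only if there exists a point $(y, s) \in \Pi_k \cap R(x, t)$ with $y \in \psi_{k-1}(s)$; the triangle inequality gives $R(y, s) \subseteq R(x, t)$ for every such $(y, s)$, and the inductive hypothesis then handles the lower-order events.

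Next, the covariance computation appearing inside the proof of Lemma \ref{thm:no_overlaps} shows that $R(x, t) \cap R(y, t)$ has Lebesgue measure zero precisely when $|x - y| > 2 \alpha t$. Independence of Poisson processes on disjoint measurable sets then yields
$$\Cov \left(\1_{\{x \in \psi_k(t)\}}, \1_{\{y \in \psi_k(t)\}}\right) = 0 \quad \text{whenever } y \notin B_x(2 \alpha t).$$
For $y \in B_x(2 \alpha t)$, I would use the crude bound $P(A \cap B) - P(A) P(B) \leq P(A)(1 - P(B)) \leq P(A)$. Combining these,
$$\Var(Y_k(t)) \leq \int_{\mathcal{T}} P(x \in \psi_k(t)) \, |B_x(2 \alpha t)| \, dx \leq \gamma_d (2 \alpha t)^d \int_{\mathcal{T}} P(x \in \psi_k(t)) \, dx = \gamma_d (2 \alpha t)^d E[Y_k(t)],$$
using that a torus ball of radius $r$ has Lebesgue measure at most $\gamma_d r^d$ (realize it as the image of a Euclidean ball of radius $r$ under the measure-preserving projection $\R^d \to \mathcal{T}$) and the spatial-homogeneity identity $E[Y_k(t)] = \int_{\mathcal{T}} P(x \in \psi_k(t)) \, dx$.

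No step here is genuinely hard, but the one demanding a little care is the inductive verification that $\{x \in \psi_k(t)\}$ is determined by the Poisson processes on $R(x, t)$ alone. The nested inclusion $R(y, s) \subseteq R(x, t)$ is what makes the induction close, and it is precisely a geometric encoding of the fact that mutation fronts propagate at finite speed $\alpha$.
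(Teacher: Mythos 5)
Your proposal is correct and follows essentially the same route as the paper: a second-moment/covariance expansion over pairs $(x,y)$, independence of the events $\{x \in \psi_k(t)\}$ and $\{y \in \psi_k(t)\}$ when $R(x,t) \cap R(y,t) = \emptyset$ (equivalently $|x-y| > 2\alpha t$), the crude bound by $P(x \in \psi_k(t))$ on the near-diagonal set, and the torus-ball volume bound $\gamma_d(2\alpha t)^d$. The only difference is cosmetic (you work with covariances directly rather than bounding $E[Y_k(t)^2]$, and you spell out the domain-of-dependence induction via $R(y,s) \subseteq R(x,t)$, which the paper leaves implicit).
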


\begin{proof}
We have
$$
E[Y_k(t)^2] = E \bigg[ \int_{{\cal T}} \int_{{\cal T}} \1_{\{x \in \psi_k(t)\}} \1_{\{y \in \psi_k(t)\}} \: dx\: dy \bigg] = \int_{{\cal T}} \int_{{\cal T}} P \big( x \in \psi_k(t), y \in \psi_k(t)\big) \: dx \: dy.
$$  
Note that if $R(x,t) \cap R(y,t) = \emptyset$ or, equivalently, if $x \notin B_y(2 \alpha t)$, then the events $\{x \in \psi_k(t)\}$ and $\{y \in \psi_k(t)\}$ are independent, and therefore $$P\big(x \in \psi_k(t), y \in \psi_k(t)\big) = P(x \in \psi_k(t))P(y \in \psi_k(t)).$$  On the other hand, if $x \in B_y(2 \alpha t)$, then $$P\big(x \in \psi_k(t), y \in \psi_k(t)\big) \leq P(y \in \psi_k(t)).$$  Because the volume of a ball in the torus is bounded above by the volume of a ball of the same radius in $\R^d$, it follows that
\begin{align*}
E[Y_k(t)^2] &\leq \int_{{\cal T}} \int_{{\cal T} \setminus B_y(2 \alpha t)} P(x \in \psi_k(t)) P(y \in \psi_k(t)) \: dx \: dy + \int_{{\cal T}} \int_{B_y(2 \alpha t)} P(y \in \psi_k(t)) \: dx \: dy. \\
&\leq \int_{{\cal T}} \int_{{\cal T}} P(x \in \psi_k(t)) P(y \in \psi_k(t)) \: dx \: dy + \gamma_d (2 \alpha t)^d \int_{{\cal T}} P(y \in \psi_k(t)) \: dy \\
&= (E[Y_k(t)])^2 + \gamma_d (2 \alpha t)^d E[Y_k(t)],
\end{align*}
which implies the result.
\end{proof}

The next result gives conditions under which the value of $Y_1(t)$ can be approximated by its expectation.  The condition $N \mu_1 t \rightarrow \infty$ ensures that many mutations have occurred by time $t$, which means the region $\psi_1(t)$ will not be dominated by the effect of a single mutation.  The condition $\alpha t \ll N^{1/d}$ ensures that no single mutation has had a chance to spread to a large fraction of the space by time $t$.  These conditions together stipulate that $\psi_1(t)$ consists of a union of many small balls, which are possibly overlapping.

\begin{Lemma}\label{conc1}
Suppose $N \mu_1 t \rightarrow \infty$ and $\alpha t \ll N^{1/d}$.  Then for all $\eps > 0$,
$$\lim_{N \rightarrow \infty} P\big((1 - \eps)E[Y_1(t)] \leq Y_1(t) \leq (1 + \eps)E[Y_1(t)] \big) = 1.$$
\end{Lemma}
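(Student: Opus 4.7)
The plan is to apply Chebyshev's inequality, using the variance bound from Lemma \ref{varY} and the explicit formula for $E[Y_1(t)]$ from Lemma \ref{meanY}. The key inequality will be
$$P\big(|Y_1(t) - E[Y_1(t)]| > \varepsilon E[Y_1(t)]\big) \leq \frac{\Var(Y_1(t))}{\varepsilon^2 E[Y_1(t)]^2} \leq \frac{\gamma_d (2\alpha t)^d}{\varepsilon^2 E[Y_1(t)]},$$
so it suffices to show $(2\alpha t)^d/E[Y_1(t)] \to 0$ as $N \to \infty$.

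First I would note that the assumption $\alpha t \ll N^{1/d}$ guarantees $t \leq N^{1/d}/(2\alpha)$ for all sufficiently large $N$, so the formula $E[Y_1(t)] = N(1 - \exp(-\mu_1 \gamma_d \alpha^d t^{d+1}/(d+1)))$ from Lemma \ref{meanY} is available. Next I would split into two regimes depending on the behavior of $\lambda_N := \mu_1 \gamma_d \alpha^d t^{d+1}/(d+1)$ along the subsequence under consideration. If $\lambda_N \to 0$, then the elementary bound $1 - e^{-\lambda_N} \geq \lambda_N/2$ for small $\lambda_N$ gives
$$E[Y_1(t)] \geq \frac{N \mu_1 \gamma_d \alpha^d t^{d+1}}{2(d+1)},$$
and so
$$\frac{(2\alpha t)^d}{E[Y_1(t)]} \leq \frac{2^{d+1}(d+1)}{\gamma_d N \mu_1 t},$$
which tends to zero by the hypothesis $N \mu_1 t \to \infty$. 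If instead $\lambda_N$ is bounded away from $0$ along a subsequence, then $E[Y_1(t)] \geq c N$ for some $c > 0$ along that subsequence, so $(2\alpha t)^d/E[Y_1(t)] \leq (2\alpha t)^d/(cN) \to 0$ by the hypothesis $\alpha t \ll N^{1/d}$. Since every subsequence has a further subsequence on which one of these two regimes holds, the ratio tends to $0$ unconditionally.

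Combining with Chebyshev yields $P(|Y_1(t) - E[Y_1(t)]| > \varepsilon E[Y_1(t)]) \to 0$, which is exactly the stated conclusion. There is no real obstacle here; the only mildly delicate point is the case split above, needed because $E[Y_1(t)]/N$ may or may not be bounded away from zero, and these two regimes call for different uses of the two hypotheses ($N \mu_1 t \to \infty$ in the dilute regime, $\alpha t \ll N^{1/d}$ in the saturated regime).
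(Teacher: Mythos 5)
Your proof is correct and follows essentially the same route as the paper's: Chebyshev's inequality with the variance bound of Lemma \ref{varY}, the exact mean formula of Lemma \ref{meanY}, and a two-regime case split (with a subsequence argument) according to whether $\mu_1\alpha^d t^{d+1}$ tends to zero or stays bounded away from zero. No gaps.
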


\begin{proof}
By Lemma \ref{varY} and Chebyshev's Inequality,
\begin{equation}\label{var1cheb}
P \big(\big|Y_1(t) - E[Y_1(t)] \big| > \eps E[Y_1(t)]\big) \leq \frac{\Var(Y_1(t))}{\eps^2 (E[Y_1(t)])^2} = \frac{\gamma_d (2 \alpha t)^d}{\eps^2 E[Y_1(t)]}. 
\end{equation}
It remains to show that the right-hand side of (\ref{var1cheb}) tends to zero as $N \rightarrow \infty$.  Note that the assumption that $\alpha t \ll N^{1/d}$ means that the conclusion of Lemma \ref{meanY} holds for sufficiently large $N$.  We consider two cases.  First, suppose $\mu_1 \alpha^d t^{d+1} \rightarrow 0$.  Then by Lemma \ref{meanY}, as $N \rightarrow \infty$, we have $$E[Y_1(t)] \sim \frac{N \mu_1 \gamma_d \alpha^d t^{d+1}}{d+1}.$$ Therefore, the assumption that $N \mu_1 t \rightarrow \infty$ implies that $$\frac{\gamma_d (2 \alpha t)^d}{\eps^2 E[Y_1(t)]} \sim \frac{2^d(d+1)}{\eps^2} \cdot \frac{1}{N \mu_1 t} \rightarrow 0.$$  Alternatively, suppose $$\liminf_{N \rightarrow \infty} \mu_1 \alpha^d t^{d+1} > 0.$$  Then by Lemma \ref{meanY}, the expectation $E[Y_1(t)]$ is bounded below by a constant multiple of $N$, and therefore the assumption that $\alpha t \ll N^{1/d}$ implies that the right-hand side of (\ref{var1cheb}) tends to zero.  Because the right-hand side of (\ref{var1cheb}) tends to zero in both cases, a subsequence argument completes the proof.
\end{proof}

The next result is similar to Lemma \ref{conc1} but holds for $k \geq 2$.  Note that condition (\ref{largemucond}) below reduces to the condition that $N \mu_1 t \rightarrow \infty$ when $k = 1$.  This condition ensures that many mutations to type $k$ will happen before time $t$, which is necessary to obtain a concentration result.  The condition $\mu_j \alpha^d t^{d+1} \rightarrow 0$ is stronger than the corresponding hypothesis in Lemma \ref{conc1}.   As noted above, this condition ensures that among the individuals with at least $j-1$ mutations, only a small fraction will have acquired a $j$th mutation.

\begin{Lemma}\label{conck}
Suppose that as $N \rightarrow \infty$, we have $\mu_j \alpha^d t^{d+1} \rightarrow 0$ for all $j \in \{1, \dots, k\}$ and
\begin{equation}\label{largemucond}
\bigg( \prod_{i=1}^k \mu_i \bigg) N \alpha^{(k-1)d} t^{(k-1)d + k} \rightarrow \infty.
\end{equation}
Then
$$\lim_{N \rightarrow \infty} P\big((1 - \eps)E[Y_k(t)] \leq Y_k(t) \leq (1 + \eps)E[Y_k(t)] \big) = 1.$$
\end{Lemma}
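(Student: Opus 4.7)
The plan is to mimic the proof of Lemma \ref{conc1}, combining the variance bound of Lemma \ref{varY} with Chebyshev's inequality, and then using Lemma \ref{thm:no_overlaps} to obtain an explicit asymptotic expression for $E[Y_k(t)]$ so that the Chebyshev bound can be seen to vanish.

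Concretely, I would start from Chebyshev's inequality and Lemma \ref{varY} to write
\begin{equation*}
P\big(|Y_k(t) - E[Y_k(t)]| > \eps E[Y_k(t)]\big) \leq \frac{\Var(Y_k(t))}{\eps^2 (E[Y_k(t)])^2} \leq \frac{\gamma_d (2 \alpha t)^d}{\eps^2 E[Y_k(t)]},
\end{equation*}
so the task reduces to showing that $(\alpha t)^d / E[Y_k(t)] \to 0$.

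The hypothesis $\mu_j \alpha^d t^{d+1} \to 0$ for all $j \in \{1, \dots, k\}$ is precisely what is needed to invoke Lemma \ref{thm:no_overlaps}, which gives $E[Y_k(t)] \sim v_k(t)$ as $N \to \infty$. Substituting the explicit formula for $v_k(t)$ from (\ref{vkdef}), we get
\begin{equation*}
\frac{(2 \alpha t)^d}{E[Y_k(t)]} \sim \frac{2^d (k(d+1))!}{\gamma_d^{k-1} (d!)^k} \cdot \frac{1}{\big(\prod_{i=1}^k \mu_i\big) N \alpha^{(k-1)d} t^{k(d+1)-d}},
\end{equation*}
and since $k(d+1) - d = (k-1)d + k$, assumption (\ref{largemucond}) forces this quantity to zero. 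Combining this with the Chebyshev bound yields the result.

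The main ``obstacle''---really just a bookkeeping point to be careful about---is that Lemma \ref{thm:no_overlaps} gives us an \emph{asymptotic} equivalence $E[Y_k(t)]/v_k(t) \to 1$ rather than a clean equality, so the substitution above should be written as an asymptotic equivalence and then the conclusion extracted for $N$ large enough that $E[Y_k(t)] \geq v_k(t)/2$, say. Everything else is a direct computation, and no new ideas beyond those already used for Lemma \ref{conc1} are required.
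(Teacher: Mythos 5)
Your proposal is correct and follows exactly the paper's argument: Chebyshev's inequality with the variance bound of Lemma \ref{varY} reduces the problem to showing $(\alpha t)^d/E[Y_k(t)] \to 0$, and Lemma \ref{thm:no_overlaps} together with the formula for $v_k(t)$ shows this is equivalent to (\ref{largemucond}). The asymptotic-equivalence bookkeeping you flag is handled implicitly in the paper in the same way.
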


\begin{proof}
By Lemma \ref{varY} and Chebyshev's Inequality,
\begin{equation}\label{varkcheb}
P \big(\big|Y_k(t) - E[Y_k(t)] \big| > \eps E[Y_k(t)]\big) \leq \frac{\Var(Y_k(t))}{\eps^2 (E[Y_k(t)])^2} = \frac{\gamma_d (2 \alpha t)^d}{\eps^2 E[Y_k(t)]}, 
\end{equation}
so it remains to show that $(\alpha t)^d/E[Y_k(t)] \rightarrow 0$.  By Lemma \ref{thm:no_overlaps}, this is equivalent to the condition that $(\alpha t)^d \ll v_k(t)$, which is equivalent to (\ref{largemucond}).
\end{proof}

Our next result establishes conditions when a monotone stochastic process can be well approximated by a deterministic function. 

\begin{Lemma}\label{conc:MonotoneProcess}
Suppose, for all positive integers $N$, $(Y_N(t), t \geq 0)$ is a nondecreasing stochastic process with finite mean for all $t>0$.  Assume there exist sequences of positive numbers $(\nu_N)_{N=1}^{\infty}$ and $(s_N)_{N=1}^{\infty}$ and a continuous nondecreasing function $g$ such that for all $t>0$ and $\eps>0$, we have
\begin{align}
\label{eq:MonotoneProcessCond1}
\lim_{N\to\infty} P\big(|Y_N(s_Nt)-E[Y_N(s_Nt)]|>\eps E[Y_N(s_Nt)]\big)=0
\end{align}
and
\begin{align}
\label{eq:MonotoneProcessCond2}
\lim_{N\to\infty}\frac{1}{\nu_N}E[Y_N(s_Nt)]=g(t).
\end{align}
Then for all $\eps>0$ and $\delta>0$, we have
$$
\lim_{N\to\infty}P\left(\nu_Ng(t)(1-\eps)\leq Y_N(s_Nt)\leq \nu_Ng(t)(1+\eps) \mbox{ for all }t\in[\delta,\delta^{-1}]\right)=1.
$$
\end{Lemma}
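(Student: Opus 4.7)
\medskip
\noindent\emph{Proof plan.} The strategy is to upgrade the pointwise-in-$t$ convergence in probability implied by hypotheses (\ref{eq:MonotoneProcessCond1}) and (\ref{eq:MonotoneProcessCond2}) to the uniform-in-$t$ conclusion by discretizing $[\delta, \delta^{-1}]$ into finitely many grid points and then sandwiching intermediate values using the joint monotonicity of $Y_N$ and of $g$.

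First I would fix $\delta, \eps > 0$ and pick $\eta > 0$ small enough that $(1+\eta)^3 \leq 1+\eps$ and $(1-\eta)^2/(1+\eta) \geq 1-\eps$. Since $g$ is continuous and nondecreasing on the compact interval $[\delta,\delta^{-1}]$, and in the relevant applications $g(\delta) > 0$ (so $g$ is bounded below by a positive constant there), $g$ is uniformly continuous, and I can choose a partition $\delta = t_0 < t_1 < \dots < t_m = \delta^{-1}$ with $g(t_i) \leq (1+\eta)\, g(t_{i-1})$ for every $i \in \{1,\dots,m\}$.

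At each of the finitely many grid points, hypothesis (\ref{eq:MonotoneProcessCond2}) gives $(1-\eta)\nu_N g(t_i) \leq E[Y_N(s_N t_i)] \leq (1+\eta)\nu_N g(t_i)$ for all sufficiently large $N$, and hypothesis (\ref{eq:MonotoneProcessCond1}) then yields $|Y_N(s_N t_i) - E[Y_N(s_N t_i)]| \leq \eta E[Y_N(s_N t_i)]$ with probability tending to one. A union bound over $i \in \{0,\dots,m\}$ produces an event $E_N$ with $P(E_N) \to 1$ on which
\begin{equation*}
(1-\eta)^2 \nu_N g(t_i) \;\leq\; Y_N(s_N t_i) \;\leq\; (1+\eta)^2 \nu_N g(t_i) \qquad \text{for every } i.
\end{equation*}

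Finally, on $E_N$, any $t \in [t_{i-1},t_i]$ satisfies $Y_N(s_N t_{i-1}) \leq Y_N(s_N t) \leq Y_N(s_N t_i)$ by monotonicity of $Y_N$, while $g(t)/(1+\eta) \leq g(t_{i-1}) \leq g(t) \leq g(t_i) \leq (1+\eta)g(t)$ by monotonicity of $g$ together with the spacing of the grid. Chaining the two displays gives
\begin{equation*}
\frac{(1-\eta)^2}{1+\eta}\,\nu_N g(t) \;\leq\; Y_N(s_N t) \;\leq\; (1+\eta)^3 \nu_N g(t),
\end{equation*}
which by the choice of $\eta$ lies in $[(1-\eps)\nu_N g(t),\,(1+\eps)\nu_N g(t)]$ uniformly in $t \in [\delta,\delta^{-1}]$. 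I do not anticipate any serious obstacle; the only mild subtlety is selecting the grid fine enough that the multiplicative growth of $g$ across each subinterval is absorbed into the $\eps$-tolerance, which is exactly what uniform continuity of $g$ (away from its zero set) buys us.
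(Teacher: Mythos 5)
Your proposal is correct and follows essentially the same argument as the paper: discretize $[\delta,\delta^{-1}]$ into a multiplicative grid using uniform continuity of $g$, apply the two hypotheses at the finitely many grid points with a union bound, and sandwich intermediate times using the monotonicity of $Y_N$ and $g$. The only differences are trivial bookkeeping in the choice of tolerance constants, and your explicit remark that $g(\delta)>0$ is needed for the multiplicative grid to exist is a point the paper leaves implicit.
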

\begin{proof}
Choose $\theta > 0$ sufficiently small that $(1 + 2 \theta)(1 + \theta) \leq 1 + \eps$ and $(1 - 2 \theta)/(1 + \theta) \geq 1 - \eps$.
Because $g$ is continuous, and thus uniformly continuous over compact intervals, we can choose a positive integer $M$ depending on $\delta$ and $\theta$ and positive real numbers $\delta = r_1 < r_2 < \dots < r_M = \delta^{-1}$ such that for $k \in \{1, \dots, M\}$, we have
\begin{equation}\label{gucont}
g(r_{k+1}) \leq (1 + \theta) g(r_k).
\end{equation}
Therefore, \eqref{eq:MonotoneProcessCond1} implies that
\begin{align}
\label{eq:conc2}
\lim_{N \rightarrow \infty} P \big( (1 - \theta) E[Y_N(s_N r_k)] \leq Y_N(s_N r_k) \leq (1 + \theta) E[Y_N(s_N r_k)] \mbox{ for all }k \in \{1, \dots, M\} \big) = 1.
\end{align}
Define the event
$$
A_N(\theta)=\left\{(1-2\theta)\nu_Ng(r_k)\leq Y_N(s_Nr_k)\leq (1+2\theta)\nu_Ng(r_k)\mbox{ for all }k\in\{1,\ldots,M\}\right\},
$$
and note that \eqref{eq:MonotoneProcessCond2} and \eqref{eq:conc2} imply that
\begin{align}
\label{ANlim}
\lim_{N\to\infty} P(A_N(\theta))=1.
\end{align}
Suppose $r_k \leq r \leq r_{k+1}$ for some $k \in \{1, \dots, M-1\}$.  Because $t \mapsto Y_N(t)$ is nondecreasing, on the event $A_N(\theta)$ we have $$Y_N(s_N r) \leq Y(s_N r_{k+1}) \leq (1 + 2 \theta) \nu_N g(r_{k+1}) \leq (1 + 2 \theta)(1 + \theta) \nu_N g(r_k) \leq (1 + \eps)\nu_Ng(r)$$ and
$$Y_N(s_N r) \geq Y(s_N r_k) \geq (1 - 2 \theta)\nu_Ng(r_k) \geq \frac{(1 - 2 \theta)\nu_Ng(r_{k+1})}{1 + \theta} \geq (1 - \eps)\nu_Ng(r).$$
The result of the lemma thus follows from (\ref{ANlim}).
\end{proof}

We finally establish a limit theorem for $\sigma_k$ in Cases 6 and 7 when $k=2$, and Case 2 when $k\geq3$.
Note that when $k\geq 3$, we are assuming that $\mu_i = \mu$ for all $i$.  Recall that $\beta_k$ was defined in (\ref{eq:beta_def}).

\begin{Theo}\label{thm:6and7andCaseII}
For all $t>0$, we have the following three statements.
\begin{enumerate}
\item If $k=2$ and the parameters satisfy the conditions of Case 6 then 
$$
\lim_{N\to\infty}P(\sigma_2>\beta_2 t)=\exp\left(-\frac{\gamma_dt^{d+2}}{(d+1)(d+2)}\right).
$$
\item If $k=2$ and the parameters satisfy the conditions of Case 7 then
$$
\lim_{N\to\infty}P(\sigma_2>\beta_2 t)=\exp\left(-c^{(d+1)/(d+2)}\int_0^t\left(1-\exp\left(-\frac{\gamma_d u^{d+1}}{(d+1)c^{(d+1)/(d+2)}}\right)\right)du\right).
$$
\item If $k\geq 3$ and the parameters satisfy the conditions of Case 2 then 
$$
\lim_{N\to\infty}P(\sigma_k>\beta_{k} t)=\exp\left(-\frac{\gamma_d^{k-1}(d!)^{k-1}t^{d(k-1)+k}}{(d(k-1)+k)!}\right).
$$
\end{enumerate}
\end{Theo}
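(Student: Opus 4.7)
The plan is to reduce all three parts to the conditional Poisson identity
\begin{equation}\label{eq:propPoisson}
P(\sigma_k > t) = E\!\left[\exp\!\left(-\mu_k\int_0^t Y_{k-1}(s)\,ds\right)\right],
\end{equation}
which follows from the observation that, conditional on $\Pi_1,\dots,\Pi_{k-1}$, one has $\sigma_k = \inf\{s:\Pi_k\cap\{(x,r):r\le s,\ x\in\psi_{k-1}(r)\}\ne\emptyset\}$: no type-$k$ mutation can occur before this infimum, and at the infimum the corresponding point of $\Pi_k$ lies in $\chi_{k-1}=\psi_{k-1}$ (since $\psi_k$ is still empty) and triggers a mutation. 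Because $\psi_{k-1}$ is measurable with respect to $\sigma(\Pi_1,\dots,\Pi_{k-1})$ and the relevant space-time region has Lebesgue measure $\int_0^t Y_{k-1}(s)\,ds$, equation (\ref{eq:propPoisson}) is just the Poisson void probability. After substituting $s=\beta_k u$, it therefore suffices to show that $\mu_k\beta_k\int_0^t Y_{k-1}(\beta_k u)\,du$ converges in probability to the constant appearing in the exponential in each part; bounded convergence applied to (\ref{eq:propPoisson}) then finishes the proof.

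To produce this in-probability convergence I would combine the concentration Lemmas \ref{conc1} and \ref{conck} with the monotone-process upgrade of Lemma \ref{conc:MonotoneProcess}, choosing the scale $\nu_N$ and profile $g$ case by case. In Part 1 (Case 6), Lemma \ref{meanY} with $1-e^{-x}\sim x$ gives $E[Y_1(\beta_2 u)]\sim N\mu_1\gamma_d\alpha^d(\beta_2 u)^{d+1}/(d+1)$, Lemma \ref{conc1} supplies concentration, and with $\nu_N=N\mu_1\gamma_d\alpha^d\beta_2^{d+1}/(d+1)$ and $g(u)=u^{d+1}$ the identity $\beta_2^{d+2}=(N\mu_1\mu_2\alpha^d)^{-1}$ yields $\mu_2\beta_2\nu_N\to\gamma_d/(d+1)$, producing the target $\gamma_d t^{d+2}/((d+1)(d+2))$. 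Part 2 (Case 7) uses the same scheme with Lemma \ref{meanY} in its full (non-linearized) form: the Case 7 hypothesis translates into $\mu_1\alpha^d\beta_2^{d+1}\to c^{-(d+1)/(d+2)}$ and $N\mu_2\beta_2\to c^{(d+1)/(d+2)}$, so one takes $\nu_N=N$ and $g(u)=1-\exp(-\gamma_d u^{d+1}/((d+1)c^{(d+1)/(d+2)}))$ and reads off the integral in the statement. In Part 3 ($k\ge 3$, Case 2), Lemmas \ref{thm:no_overlaps} and \ref{conck} together give $Y_{k-1}(\beta_k u)\approx v_{k-1}(\beta_k u)$ in probability, once one checks that their hypotheses reduce to $\mu\alpha^d\beta_k^{d+1}\to 0$ (the Case 2 condition) and its algebraic consequence $\mu^{k-1}N\alpha^{(k-2)d}\beta_k^{(k-2)d+k-1}=1/(\mu\alpha^d\beta_k^{d+1})\to\infty$. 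Taking $\nu_N=v_{k-1}(\beta_k)$ and $g(u)=u^{(k-1)(d+1)}$, the identity $\mu^k N\alpha^{(k-1)d}\beta_k^{(k-1)d+k}=1$ together with the factorial identity $((k-1)(d+1))!\cdot((k-1)d+k)=((k-1)d+k)!$ collapses the result into $\gamma_d^{k-1}(d!)^{k-1}t^{(k-1)d+k}/((k-1)d+k)!$.

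Since Lemma \ref{conc:MonotoneProcess} only provides uniform control on $u\in[\delta,\delta^{-1}]$, a separate argument is needed for $\mu_k\beta_k\int_0^\delta Y_{k-1}(\beta_k u)\,du$. By Lemma \ref{lemma:Occupancy}, its expectation is at most $\mu_k\beta_k\int_0^\delta v_{k-1}(\beta_k u)\,du$, which the same identities show is a numerical constant times $\delta^{(k-1)d+k}$ and so vanishes as $\delta\downarrow 0$ uniformly in $N$; Markov's inequality then makes this remainder negligible. The main obstacle, in my view, is the case-by-case arithmetic verification of the scaling hypotheses of Lemmas \ref{conc1}, \ref{conck}, and \ref{thm:no_overlaps} under the three parameter regimes and the bookkeeping for the $\beta_k$ exponents. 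The most delicate case is Part 2, where $Y_1$ is of order $N$ rather than $o(N)$ (so Lemma \ref{meanY} cannot be linearized), and one must verify that the variance bound $\gamma_d(2\alpha\beta_2)^d E[Y_1(\beta_2 u)]$ from Lemma \ref{varY} is genuinely $o(N^2)$ under the Case 7 assumption.
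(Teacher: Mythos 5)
Your proposal is correct and follows essentially the same route as the paper's proof: the conditional Poisson void probability for $\sigma_k$ given $\Pi_1,\dots,\Pi_{k-1}$, concentration of $Y_{k-1}$ via Lemmas \ref{conc1}, \ref{conck}, and \ref{conc:MonotoneProcess} on $[\delta,\delta^{-1}]$, a first-moment (Lemma \ref{lemma:Occupancy}) bound of order $\delta^{(k-1)d+k}$ for the contribution of $[0,\beta_k\delta]$, and the same case-by-case choices of $\nu_N$ and $g$ up to equivalent normalizations. The scaling verifications you flag (e.g.\ $\alpha\beta_2\ll N^{1/d}$ in Case 7, $\mu\alpha^d\beta_k^{d+1}\to 0$ in Case 2) are exactly the checks the paper carries out.
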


\begin{proof}
Let $\mathcal{G}_{k-1}$ be the $\sigma$-field generated by the Poisson processes $\Pi_1, \dots, \Pi_{k-1}$, and note that the process $(Y_{k-1}(t), t\geq 0)$ is measurable with respect to $\mathcal{G}_{k-1}$. Therefore,
$$
P(\sigma_k>t|\mathcal{G}_{k-1})=\exp\left(-\mu_k\int_0^tY_{k-1}(s)ds\right).
$$
Next, for a continuous non-negative function $g$, a sequence $(\nu_N)_{N=1}^{\infty}$ of positive numbers, and positive constants $\delta$ and $\eps$, define the $\mathcal{G}_{k-1}$-measurable set
$$
B_{N}^{k-1}(\delta,\eps,g,\nu_N)=\left\{g(u)(1-\eps)\nu_N\leq Y_{k-1}(\beta_ku)\leq g(u)(1+\eps)\nu_N,\mbox{ for all }u\in [\delta,\delta^{-1}]\right\}.
$$
Suppose $r \in [\delta, \delta^{-1}]$.  Observe that on $B_{N}^{k-1}(\delta,\eps,g,\nu_N)$ we have
\begin{align*}
P(\sigma_k>\beta_k r|\mathcal{G}_{k-1})&\leq\exp\left(-\mu_k\int_{\beta_k\delta}^{\beta_k r}Y_{k-1}(s)ds\right)\\
&=
\exp\left(-\mu_k\beta_k\int_{\delta}^{r}Y_{k-1}(\beta_k u)du\right)\\
&\leq
\exp\left(-\mu_k\beta_k\nu_N(1-\eps)\int_{\delta}^rg(u)du\right).
\end{align*}
We thus conclude that
\begin{align}
\label{eq:UpperBound67_II}
P(\sigma_k>\beta_k r)\leq\exp\left(-\mu_k\beta_k\nu_N(1-\eps)\int_{\delta}^rg(u)du\right)+P\left(B_{N}^{k-1}(\delta,\eps,g,\nu_N)^c\right).
\end{align}
To obtain a lower bound, we use the inequality $e^{-x}\geq 1-x$ to get
\begin{align*}
P(\sigma_k>\beta_kr)&= E\left[\exp\left(-\mu_k\int_0^{\beta_k r}Y_{k-1}(s)ds\right)\right]\\
&\geq 
E\left[\left(1- \mu_k \int_0^{\beta_k\delta} Y_{k-1}(s)ds\right)\exp\left(-\mu_k\int_{\beta_k\delta}^{\beta_k r}Y_{k-1}(s)ds\right)\right]\\
&\geq
E\left[\exp\left(-\mu_k\int_{\beta_k\delta}^{\beta_k r}Y_{k-1}(s)ds\right)\right]-E\left[\mu_k\int_0^{\beta_k\delta}Y_{k-1}(s)ds\right].
\end{align*}
Reasoning as in \eqref{eq:UpperBound67_II} we get
$$
E\left[\exp\left(-\mu_k\int_{\beta_k\delta}^{\beta_k r}Y_{k-1}(s)ds\right)\right]\geq P\left(B_N^{k-1}\left(\delta,\eps,g,\nu_N\right)\right)\exp\left(-\nu_N(1+\eps)\beta_k\mu_k\int_{\delta}^rg(u)du\right).
$$
By Lemma \ref{lemma:Occupancy},
$$
E[Y_k(t)] \leq \frac{N \gamma_d^k (d!)^k \alpha^{kd}}{(k(d+1))!} \bigg( \prod_{i=1}^k \mu_i \bigg) t^{k(d+1)}.
$$
Therefore we can use the definition of $\beta_k$ to see that
\begin{align*}
\mu_k\int_0^{\beta_k\delta}E[Y_{k-1}(s)]ds&\leq \bigg( \prod_{i=1}^k \mu_i \bigg)\frac{N \gamma_d^{k-1}  (d!)^{k-1}\alpha^{(k-1)d}}{((k-1)(d+1))! (d(k-1)+k)}\beta_k^{d(k-1)+k}\delta^{d(k-1)+k}\\
&= \frac{\gamma_d^{k-1} (d!)^{k-1}}{(d(k-1)+k)!} \delta^{d(k-1)+k}.
\end{align*}
Therefore, we have the lower bound
\begin{align}
\label{eq:LowerBound67_II}
&P(\sigma_k>\beta_k r) \nonumber \\
&\hspace{.1in}\geq P\left(B_N^{k-1}\left(\delta,\eps,g,\nu_N\right)\right)\exp\left(-\nu_N(1+\eps)\beta_k\mu_k\int_{\delta}^rg(u)du\right)-\frac{\gamma_d^{k-1} (d!)^{k-1}}{(d(k-1)+k)!} \delta^{d(k-1)+k}.
\end{align}
We will now prove that for each of our three scenarios we can choose $\nu_N$ and $g$ such that $\lim_{N\to\infty}\nu_N\beta_k\mu_k$ exists and $P(B_{N}^{k-1}(\delta,\eps,g,\nu_N))$ goes to 1 as $N\to\infty$. We will do this by using Lemma \ref{conc:MonotoneProcess}.
Since $\delta>0$ and $\eps > 0$ are arbitrary, the result will then follow from (\ref{eq:UpperBound67_II}) and (\ref{eq:LowerBound67_II}).

We now prove the three statements of the theorem.  First, suppose $k=2$ and the parameters satisfy the conditions of Case 6.  We set $\nu_N=1/(\beta_2\mu_2)$ and define the function 
$$
g_2(u)=\gamma_d u^{d+1}/(d+1).
$$
To use Lemma \ref{conc:MonotoneProcess}, we first show the hypotheses of Lemma \ref{conc1} are satisfied, that is, $\alpha\beta_2\ll N^{1/d}$ and  $N\mu_1\beta_2\gg 1$.  To show that $\alpha\beta_2\ll N^{1/d}$ note that using the second assumption of Case 6,
$$
\left(\frac{\alpha^d\beta_2^d}{N}\right)^{d+2}=\frac{\alpha^{2d}}{N^{2(d+1)}\mu_1^d\mu_2^d}=\left(\frac{\alpha^d\mu_1}{(N\mu_2)^{d+1}}\right)\left(\frac{\alpha^d\mu_2}{(N\mu_1)^{d+1}}\right)\ll 1.
$$
To show that $N\mu_1\beta_2\gg 1$, use the definition of $\beta_2$ to conclude that
$$
\left(N\mu_1\beta_2\right)^{d+2}=\frac{(N\mu_1)^{d+1}}{\mu_2\alpha^d},
$$
which goes to infinity under the assumptions of Case 6. Therefore, Lemma \ref{conc1} applies if $k=2$ and the parameters satisfy the Case 6 assumptions.  Also, the assumptions of Case 6 imply that
$$(\mu_1 \alpha^d \beta_2^{d+1})^{d+2} = \frac{\mu_1 \alpha^d}{(N \mu_2)^{d+1}} \rightarrow 0,$$ and therefore it follows from Lemma \ref{meanY} that
$$
\lim_{N\to\infty}\beta_2\mu_2E[Y_1(\beta_2 t)]=g_2(t).
$$
Thus, Lemma \ref{conc:MonotoneProcess} applies and we can conclude that for $\delta$ and $\eps$ positive,
$$
\lim_{N\to\infty}P\left(B_N^{k-1}\left(\delta,\eps,g_2,\frac{1}{\beta_2\mu_2}\right)\right)=1,
$$
and the result is proven if $k=2$ and the parameters satisfy the Case 6 assumptions.

Suppose $k=2$ and the parameters satisfy the conditions of Case 7.  We set $\nu_N=N$ and define the function
$$
g^*_2(u)=1-\exp\left(-\frac{\gamma_du^{d+1}}{(d+1) c^{(d+1)/(d+2)}}\right).
$$
To use Lemma \ref{conc:MonotoneProcess}, we first show the hypotheses of Lemma \ref{conc1} are satisfied, that is,
$\alpha\beta_2\ll N^{1/d}$ and  $N\mu_1\beta_2\gg 1$. The assumptions of Case 7 imply that $N \mu_2 \asymp (\mu_1 \alpha^d)^{1/(d+1)}$ and therefore $\beta_2 \asymp (\mu_1 \alpha^d)^{-1/(d+1)}$.  It follows that under the assumptions of Case 7,
$$N \mu_1 \beta_2 \asymp N \mu_1^{d/(d+1)} \alpha^{-d/(d+1)} \rightarrow \infty$$
and $$\alpha \beta_2 \asymp \mu_1^{-1/(d+1)} \alpha^{1/(d+1)} \ll N^{1/d}.$$
We thus conclude that Lemma \ref{conc1} applies.  By Lemma \ref{meanY} and the assumptions of Case 7,
$$
\lim_{N \rightarrow \infty} N^{-1} E[Y_1(\beta_2 r)] = \lim_{N \rightarrow \infty} \bigg( 1 - \exp \bigg(-\frac{\mu_1 \gamma_d \alpha^d (N \mu_1 \mu_2 \alpha^d)^{-(d+1)/(d+2)} r^{d+1}}{d+1} \bigg) \bigg) = g_2^*(r).
$$
We can thus apply Lemma \ref{conc:MonotoneProcess} to conclude that for $\delta > 0$ and $\eps > 0$,
$$
\lim_{N\to\infty}P\left(B_N^{k-1}\left(\delta,\eps,g_2^*,N\right)\right)=1,
$$
when the parameters satisfy the conditions of Case 7. To conclude the proof in this setting note that the assumptions of Case 7 imply
$$
\lim_{N\to\infty}N\beta_2\mu_2=c^{(d+1)/(d+2)}.
$$

We next consider $k \geq 3$ and parameters that satisfy Case 2, so that $\mu_i = \mu$ for all $i$. In this case we set $\nu_N=1/(\beta_k\mu)$ and set $g=g_k$, where
$$
g_k(t)=\frac{\gamma_d^{k-1}(d!)^{k-1}t^{(k-1)(d+1)}}{((k-1)(d+1))!}.
$$
From the definitions of $\beta_k$ and $v_{k-1}(t)$ it is immediate that
$$
\beta_k\mu v_{k-1}(\beta_k t)=g_k(t).
$$
A short calculation yields
\begin{equation}\label{tozero}
\mu \alpha^d \beta_k^{d+1} = \left(\mu^{-d} N^{-(d+1)} \alpha^d \right)^{1/((k-1)d + k)} \rightarrow 0.
\end{equation}
Thus, we can apply Lemma \ref{thm:no_overlaps} to see that as $N\to\infty$, we have $v_{k-1}(\beta_k t)\sim E[Y_{k-1}(\beta_k t)]$ and therefore
$$
\lim_{N\to\infty}\beta_k\mu E[Y_{k-1}(\beta_k t)]=g_k(t).
$$
Application of Lemma \ref{conc:MonotoneProcess} thus requires that we prove \eqref{eq:MonotoneProcessCond1} for the process $(Y_{k-1}(t), t \geq 0)$ with $s_N=\beta_k$ and $\nu_N=1/(\beta_k\mu)$.  By Lemma \ref{conck}, we need to check (\ref{largemucond}), which in this case means showing that $N \mu^{k-1} \alpha^{(k-2)d} \beta_k^{(k-2)d + k-1} \rightarrow \infty$.  Using (\ref{tozero}),
$$N \mu^{k-1} \alpha^{(k-2)d} \beta_k^{(k-2)d + k - 1} = \frac{N \mu^{k-1} \alpha^{(k-1)d} \beta_k^{(k-1)d + k}}{\alpha^d \beta_k^{d+1}} = \frac{1}{\mu \alpha^d \beta_k^{d+1}} \rightarrow \infty.$$
Thus, Lemma \ref{conc:MonotoneProcess} implies that $P(B_N^{k-1}(\delta,\eps,1/(\beta_k\mu),g_k))\to 1$ as $N\to\infty$, which completes the proof.
\end{proof}

We finish this section with a limit theorem for $\sigma_2$ in Case 8.

\begin{Theo}\label{thm:sigma2Case8}
Assume the conditions of Case 8 hold. Then for $t>0$,
$$\lim_{N\to\infty}P(N\mu_2\sigma_2>t)=e^{-t}.$$
\end{Theo}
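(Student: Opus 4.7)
The plan is to write
$$P(N\mu_2\sigma_2>t) = E\left[\exp\left(-\mu_2 \int_0^{t/(N\mu_2)} Y_1(s)\,ds\right)\right],$$
and argue that the exponent converges to $-t$ in probability. The intuition, as described in the heuristic for Case 8, is that the type~1 regions essentially fill the torus on a time scale much shorter than the waiting time $1/(N\mu_2)$ for a successful type~2 mutation, so on the relevant time scale we may replace $Y_1(s)$ by $N$. Implementation hinges on choosing an intermediate cutoff $\tau_N$ below which the contribution of $Y_1$ to the integral is negligible, and above which $Y_1(s)$ is provably close to $N$.

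First I would take $\tau_N = f(N)(\mu_1\alpha^d)^{-1/(d+1)}$ for a sequence $f(N)\to\infty$ chosen slowly enough that (i) $N\mu_2 \tau_N \to 0$, (ii) $\alpha \tau_N \ll N^{1/d}$, and (iii) $N\mu_1 \tau_N \to\infty$ all hold. Condition (i) is possible because the Case 8 assumption $\mu_2 \ll (\mu_1\alpha^d)^{1/(d+1)}/N$ is equivalent to $(\mu_1\alpha^d)^{-1/(d+1)} \ll 1/(N\mu_2)$; condition (ii) follows from $\mu_1 \gg \alpha/N^{(d+1)/d}$, since this is the statement that $(\alpha/\mu_1)^{1/(d+1)} \ll N^{1/d}$; and (iii) follows from the identity $N\mu_1 \tau_N = f(N)\,N(\mu_1/\alpha)^{d/(d+1)}$ together with $f(N)\to\infty$. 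Condition (ii) allows application of Lemma \ref{meanY} at time $\tau_N$, giving $E[Y_1(\tau_N)]/N = 1 - \exp(-\gamma_d f(N)^{d+1}/(d+1)) \to 1$, while (ii) and (iii) are exactly the hypotheses of Lemma \ref{conc1}. Together these imply that for any $\eps > 0$ the event $A_N = \{Y_1(\tau_N) \geq (1-\eps)N\}$ satisfies $P(A_N)\to 1$.

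Next, I would split the time integral at $\tau_N$. The early piece admits the crude deterministic bound $\mu_2\int_0^{\tau_N} Y_1(s)\,ds \leq N\mu_2\tau_N \to 0$. For the late piece, monotonicity of $Y_1$ and the uniform upper bound $Y_1 \leq N$ give, on the event $A_N$,
$$(1-\eps)(t - N\mu_2\tau_N) \leq \mu_2\int_{\tau_N}^{t/(N\mu_2)} Y_1(s)\,ds \leq t.$$
Combining the two pieces, on $A_N$ the total exponent lies in an interval that shrinks to $[-t,-(1-\eps)t]$, while on $A_N^c$ the integrand $\exp(\cdot)$ is trivially in $[0,1]$. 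Taking expectations and letting $N\to\infty$ yields
$$e^{-t} \leq \liminf_{N\to\infty} P(N\mu_2\sigma_2>t) \leq \limsup_{N\to\infty} P(N\mu_2\sigma_2>t) \leq e^{-(1-\eps)t},$$
and sending $\eps \downarrow 0$ gives the desired limit $e^{-t}$.

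The main difficulty is essentially bookkeeping: constructing a single $f(N)\to\infty$ satisfying (i), (ii), and (iii) simultaneously. Since each requirement is a one-sided comparison with a sequence tending to $0$ or $\infty$ at an explicit polynomial rate in the parameters $N,\mu_1,\mu_2,\alpha$, a routine diagonal argument produces such an $f$. All of the substantive probabilistic content is already furnished by the exact mean formula of Lemma \ref{meanY} and the concentration estimate of Lemma \ref{conc1}.
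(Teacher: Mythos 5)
Your proposal is correct and follows essentially the same route as the paper's proof: both choose a cutoff time of order $(\mu_1\alpha^d)^{-1/(d+1)}$ at which Lemma \ref{meanY} and Lemma \ref{conc1} show $Y_1$ is close to $N$ with high probability, split the integral in the exponent at that cutoff, and sandwich $P(N\mu_2\sigma_2>t)$ between $e^{-t}$ and $e^{-(1-\eps)t}$ before sending $\eps\downarrow 0$. The only cosmetic difference is that the paper fixes the cutoff constant as $((d+1)\log(1/\eps)/\gamma_d)^{1/(d+1)}$ so that $E[Y_1]=N(1-\eps)$ exactly, whereas you let the constant $f(N)$ grow slowly with $N$.
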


\begin{proof}
Fix $\eps>0$ and define
$$
t_1(\eps)=\left(\frac{(d+1)\log(1/\eps)}{\mu_1\alpha^d\gamma_d}\right)^{1/(d+1)}.
$$
Note that under the assumptions of Case 8, $t_1(\eps)\ll N^{1/d}/\alpha$, and we can therefore apply Lemma \ref{meanY} to see that $E[Y_1(t_1(\eps))]=N(1-\eps)$.

We next will use Lemma \ref{conc1} to show that with high probability $Y_1(t_1(\eps))\geq N(1-\eps)^2$. We verify the first condition of Lemma \ref{conc1} by noting that, by the first condition of Case 8,
$$
N\mu_1t_1(\eps) \asymp \frac{N\mu_1^{d/(d+1)}}{\alpha^{d/(d+1)}} = \left(\frac{N^{(d+1)/d}\mu_1}{\alpha}\right)^{d/(d+1)} \rightarrow \infty.
$$
The second condition of Lemma \ref{conc1} is satisfied because $t_1(\eps) \ll N^{1/d}/\alpha$ as noted above.
Thus Lemma \ref{conc1} implies that
$$
\lim_{N\to\infty}P\left(Y_1(t_1(\eps))\geq N(1-\eps)^2\right)=1.
$$
In addition, if we define the event $D_\eps(N)=\{Y_1(t)\geq (1-\eps)^2 N \mbox{ for all } t\geq t_1(\eps)\}$, then the monotonicity of $Y_1$ gives us that $P(D_\eps(N))\to 1$ as $N\to\infty$.

We now write
$$
N\mu_2\sigma_2=N\mu_2t_1(\eps)+N\mu_2(\sigma_2-t_1(\eps)).
$$
We will show that the first term on the right hand side converges to zero and that the second term converges to an Exponential(1) random variable. Note that from the second condition of Case 8 that
$$
\frac{N\mu_2}{\left(\mu_1\alpha^d\right)^{1/(d+1)}}\ll 1,
$$
and thus $N\mu_2t_1(\eps)\ll 1.$ Finally consider
\begin{align*}
P\left(N\mu_2\left(\sigma_2-t_1(\eps)\right)>t\right)&=E\left[\exp\left(-\mu_2\int_0^{t_1(\eps)+t/(N\mu_2)}Y_1(s) \: ds\right)\right]\\
&\leq
E\left[\exp\left(-\mu_2\int_{t_1(\eps)}^{t_1(\eps)+t/(N\mu_2)}Y_1(s) \: ds\right)\right]\\
&=
E\left[\exp\left(-\mu_2\int_{t_1(\eps)}^{t_1(\eps)+t/(N\mu_2)}Y_1(s) \: ds\right)1_{D_\eps(N)}\right]\\
&\quad+
E\left[\exp\left(-\mu_2\int_{t_1(\eps)}^{t_1(\eps)+t/(N\mu_2)}Y_1(s) \: ds\right)1_{D_\eps(N)^c}\right]\\
&\leq
\exp\left(-(1-\eps)^2t\right)+P(D_\eps(N)^c).
\end{align*}
Next, we get the lower bound
$$
P\left(N\mu_2\left(\sigma_2-t_1(\eps)\right)>t\right) = E \left[ \exp\left(-\mu_2\int_0^{t_1(\eps)+t/(N\mu_2)}Y_1(s)ds\right) \right] \geq
e^{-t}\exp\left(-\mu_2Nt_1(\eps)\right).
$$
We thus conclude that
$$
e^{-t}\leq\liminf_{N\to\infty}P\left(N\mu_2\left(\sigma_2-t_1(\eps)\right)>t\right)\leq\limsup_{N\to\infty}P\left(N\mu_2\left(\sigma_2-t_1(\eps)\right)>t\right)\leq e^{-t(1-\eps)^2},
$$
and since $\eps>0$ is arbitrary the result follows.
\end{proof}

\subsection{Proof of Case 3 when $k \geq 3$ and Case 11 when $k = 2$}

Given positive integers $d$ and $k$ and positive real numbers $c_1, \dots, c_k$, we define a random variable $Z_{d,k}(c_1, \dots, c_k)$ that has the same distribution as $\sigma_k$ when $L = 1$, $\alpha = 1$, and $\mu_i = c_i$ for all $i \in \{1, \dots, k\}$.  That is, we assign a type to each site on the torus $[0,1]^d$.  At time zero, all sites have type 0.  At the times and locations of a homogeneous Poisson process of rate $c_1$ per unit area, a mutation to type 1 occurs, causing a region of type 1 individuals to grow outward from this point at rate one.  That is, $t$ time units after the mutation, the radius of the type 1 region resulting from the mutation will be $t$.  Type 1 sites acquire a second mutation at rate $c_2$ per unit area, causing a region of type 2 individuals to grow outward from this point at rate one.  This process continues until some site has type $k$.  Then $Z_{d,k}(c_1, \dots, c_k)$ denotes the first time that some site has type $k$. 

Theorem \ref{k3case3} below describes the asymptotic behavior of $\sigma_k$ in Case 3 when $k \geq 3$.  Note that under the assumptions of Case 3 when $k \geq 3$, we have $\alpha/N^{1/d} \asymp N \mu$, so the scaling in Theorem~\ref{k3case3} is comparable to the scaling in (\ref{3Xeq}).  As can be seen from Corollary \ref{case11} below, Theorem \ref{k3case3} also implies the result for Case 11 when $k = 2$.

\begin{Theo}\label{k3case3}
Fix a positive integer $k \geq 2$.  Suppose there are positive real numbers $c_1, \dots, c_k$ such that as $N \rightarrow \infty$, we have
$$\frac{\mu_i N^{(d+1)/d}}{\alpha} \rightarrow c_i$$ for all $i \in \{1, \dots, k\}$.  Then, as $N \rightarrow \infty$, we have
$$\frac{\alpha \sigma_k}{N^{1/d}} \Rightarrow Z_{d,k}(c_1, \dots, c_k).$$
\end{Theo}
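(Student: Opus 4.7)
The plan is to reduce Theorem \ref{k3case3} to a continuous-dependence statement by a scaling argument, and then to prove that continuous dependence via an explicit coupling of the driving Poisson point processes.

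First, the change of variables $x' = x/L$, $t' = \alpha t/L$ turns the process on ${\cal T} = [0, L]^d$ with growth rate $\alpha$ and mutation rates $\mu_i$ into a process on the unit torus $[0,1]^d$ with growth rate $1$ and mutation rates
$$c_i^{(N)} := \frac{\mu_i L^{d+1}}{\alpha} = \frac{\mu_i N^{(d+1)/d}}{\alpha}.$$
A direct bookkeeping check (volumes scale by $L^{-d}$, times by $\alpha/L$) shows that the rescaled Poisson process on $[0,1]^d \times [0,\infty)$ has intensity $c_i^{(N)}$, and that in the rescaled variables the first $k$-mutation time equals $\alpha \sigma_k/N^{1/d}$. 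The hypothesis of the theorem is precisely $c_i^{(N)} \to c_i$, so matters reduce to proving that if $c_i^{(N)} \to c_i \in (0,\infty)$ for each $i$, then the first $k$-mutation time $Z^{(N)}$ of the unit-torus model with rates $c_i^{(N)}$ converges in distribution to $Z := Z_{d,k}(c_1, \dots, c_k)$.

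Next, I would build an explicit coupling on a common probability space. Let $\Xi_1, \dots, \Xi_k$ be independent Poisson point processes on $[0,1]^d \times [0,\infty) \times [0,\infty)$ of unit intensity, and for each $\lambda > 0$ define the thinning
$$\Xi_i^\lambda := \{(x, t) : (x, t, u) \in \Xi_i,\ u \leq \lambda\},$$
which is a rate-$\lambda$ Poisson point process on $[0,1]^d \times [0,\infty)$. Drive the $N$th unit-torus model by $\Xi_1^{c_1^{(N)}}, \dots, \Xi_k^{c_k^{(N)}}$ and the limit model by $\Xi_1^{c_1}, \dots, \Xi_k^{c_k}$. The goal then becomes to show $Z^{(N)} \to Z$ almost surely.

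The key step is the continuity of the first-$k$-mutation time as a function of the Poisson data. Fix $T > 0$. Almost surely, for every $i$ and every $M > 0$ the set $\Xi_i \cap ([0,1]^d \times [0, T] \times [0, M])$ is finite, and no point of $\Xi_i$ has $u$-coordinate exactly equal to $c_i$. Consequently, for all sufficiently large $N$ we have $\Xi_i^{c_i^{(N)}} \cap ([0,1]^d \times [0, T]) = \Xi_i^{c_i} \cap ([0,1]^d \times [0, T])$ for every $i$. Using the representation of $\psi_k(t)$ in Section \ref{proofsec} via the cones $R(x, t) \subseteq {\cal T} \times [0, t]$ from (\ref{Rxtdef}), the dynamics on $[0, T]$ depend only on the Poisson points in $[0,1]^d \times [0, T]$ (a fact one verifies by induction on $k$), so $Z^{(N)} = Z$ for all large $N$ on the event $\{Z \leq T\}$. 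Since $Z$ is almost surely finite --- being stochastically dominated by a finite sum of independent exponential waiting times together with the diameter $\sqrt{d}/2$, which bounds the time for any single mutation to reach every point of the unit torus --- sending $T \to \infty$ gives $Z^{(N)} \to Z$ almost surely, and hence in distribution.

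The step I anticipate requiring the most care is the formal verification that the dynamics on $[0, T]$ are determined by the Poisson points in the time slab $[0,1]^d \times [0, T]$, together with the handling of the null event at $u = c_i$. Both are routine but should be stated precisely using the set $R(x, t)$ from (\ref{Rxtdef}) and an induction on $k$ paralleling the one used in Lemma \ref{lemma:Occupancy}.
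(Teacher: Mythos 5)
Your proposal is correct and follows essentially the same route as the paper: the identical space--time rescaling (volumes by $L^{-d}$, times by $\alpha/L$) reduces the theorem to the continuity of the law of $Z_{d,k}(c_1,\dots,c_k)$ as a function of the parameters. The only difference is that the paper asserts this continuity as ``easy to see,'' whereas you supply an explicit thinning coupling that proves it; your argument is sound, including the localization to a finite time horizon (the dynamics on $[0,T]$ depend only on Poisson points with time coordinate in $[0,T]$, by induction via $R(x,t)$) and the use of the a.s.\ finiteness of $Z_{d,k}(c_1,\dots,c_k)$, which follows from the stochastic upper bound in Proposition \ref{distprop1}.
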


\begin{proof}
Consider a rescaling of the original process defined on the torus $[0, 1]^d$ of side length $1$ such that the type of the site $x$ in the rescaled process at time $t$ is the same as the type of the original process at the site $N^{1/d} x$ at time $N^{1/d} t/\alpha$.  After this time and space rescaling, the radius of a region of type $i$ individuals is expanding at rate $$\alpha \cdot \frac{1}{N^{1/d}} \cdot \frac{N^{1/d}}{\alpha} = 1,$$ where the second factor on the left-hand side accounts for the rescaling of space and the third factor accounts for the rescaling of time.  For this rescaled process, the rate of type $i$ mutations per unit volume is given by $$\mu_i \cdot N \cdot \frac{N^{1/d}}{\alpha} = \frac{\mu_i N^{(d+1)/d}}{\alpha}.$$
Therefore, the distribution of the time before a type $k$ site appears in the rescaled process is exactly the same as the distribution of $$Z_{d,k}\bigg( \frac{\mu_1 N^{(d+1)/d}}{\alpha}, \dots, \frac{\mu_k N^{(d+1)/d}}{\alpha} \bigg).$$  By assumption, we have $\mu_i N^{(d+1)/d}/\alpha \rightarrow c_i$ as $N \rightarrow \infty$ for all $i \in \{1, \dots, k\}$.  Because it is easy to see that for any positive integers $d$ and $k$, the distribution of $Z_{d,k}(c_1, \dots, c_k)$ is a continuous function of $c_1, \dots, c_k$, it follows that the distribution of the time until some individual acquires $k$ mutations in the rescaled process converges as $N \rightarrow \infty$ to the distribution of $Z_{d,k}(c_1, \dots, c_k)$.  This observation implies the result, after taking into account the rescaling of time.
\end{proof}

\begin{Cor}\label{case11}
Suppose there are positive real numbers $c_1$ and $c_2$ such that as $N \rightarrow \infty$, we have
\begin{equation}\label{case11cond}
\frac{\mu_1 N^{(d+1)/d}}{\alpha} \rightarrow c_1, \hspace{.5in} \frac{\mu_2 \alpha^d}{(N \mu_1)^{d+1}} \rightarrow c_2.
\end{equation}
Then, as $N \rightarrow \infty$, we have $$\frac{\alpha}{N^{1/d}} \sigma_2 \Rightarrow Z_{d,2}(c_1, c_2 c_1^{d+1}).$$
\end{Cor}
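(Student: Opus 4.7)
The plan is to reduce Corollary \ref{case11} to a direct application of Theorem \ref{k3case3} in the case $k=2$. The theorem requires that both $\mu_1 N^{(d+1)/d}/\alpha$ and $\mu_2 N^{(d+1)/d}/\alpha$ converge to positive constants, so the entire task is to extract the limiting value of the second ratio from the hypotheses (\ref{case11cond}) and check that it agrees with $c_2 c_1^{d+1}$.

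The first hypothesis in (\ref{case11cond}) immediately gives $\mu_1 N^{(d+1)/d}/\alpha \to c_1$, which is the first constant needed to invoke Theorem \ref{k3case3}. For the second constant, I would write
\begin{equation*}
\frac{\mu_2 N^{(d+1)/d}}{\alpha} = \frac{\mu_2 \alpha^d}{(N \mu_1)^{d+1}} \cdot \bigg( \frac{\mu_1 N^{(d+1)/d}}{\alpha} \bigg)^{d+1},
\end{equation*}
an identity that is checked by counting powers of $\mu_1$, $\alpha$, and $N$ on the right. By the two hypotheses in (\ref{case11cond}), the first factor converges to $c_2$ and the second converges to $c_1^{d+1}$, so the product converges to $c_2 c_1^{d+1}$.

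With both limits established, I would invoke Theorem \ref{k3case3} with $k = 2$ and the constants $c_1$ and $c_2 c_1^{d+1}$ to conclude that
\begin{equation*}
\frac{\alpha \sigma_2}{N^{1/d}} \Rightarrow Z_{d,2}(c_1, c_2 c_1^{d+1}),
\end{equation*}
which is exactly the statement of the corollary. There is no real obstacle here: the argument is nothing more than a bookkeeping identity plus a citation of the previous theorem, so the only thing to be careful about is the algebraic verification of the displayed identity above.
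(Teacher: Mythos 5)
Your proposal is correct and is essentially identical to the paper's own proof: both deduce $\mu_2 N^{(d+1)/d}/\alpha \to c_2 c_1^{d+1}$ from the two hypotheses and then cite Theorem \ref{k3case3} with $k=2$. The only difference is that you display the algebraic identity explicitly (and it checks out, since $(d+1)^2/d - (d+1) = (d+1)/d$), which the paper leaves implicit.
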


\begin{proof}
Note that (\ref{case11cond}) implies that
$$\frac{\mu_2 N^{(d+1)/d}}{\alpha} \rightarrow c_2 c_1^{d+1}.$$  The result therefore follows from Theorem \ref{k3case3} when $k = 2$.
\end{proof}

The last two propositions collect facts about the random variables $Z_{d,k}(c_1, \dots, c_k)$.

\begin{Prop}\label{distprop1}
Suppose $W_1, \dots, W_k$ are independent exponentially distributed random variables with rate parameters $c_1, \dots, c_k$ respectively.  We write $X \preceq Y$ if $Y$ stochastically dominates $X$.  Then
\begin{equation}\label{sdom}
\sum_{i=1}^k W_i \preceq Z_{d,k}(c_1, \dots, c_k) \preceq \frac{(k-1) \sqrt{d}}{2} + \sum_{i=1}^k W_i.
\end{equation}
\end{Prop}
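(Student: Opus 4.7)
The plan is to establish the two stochastic inequalities separately via explicit couplings built directly from the driving Poisson processes $\Pi_1, \dots, \Pi_k$. Throughout I work in the setting defining $Z_{d,k}(c_1, \dots, c_k)$, where $\alpha = 1$ and the torus is $[0,1]^d$, whose diameter in the metric $|\cdot|$ equals $\sqrt{d}/2$.

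For the upper stochastic dominance, my plan is to construct a pessimistic auxiliary process in which one insists on type $i-1$ occupying the entire torus before looking for the $i$th mutation. Set $\bar{t}_0 = 0$, and recursively let $W_i$ be the waiting time for the first point of $\Pi_i$ after $\bar{t}_{i-1}$, together with $\bar{\sigma}_i = \bar{t}_{i-1} + W_i$ and $\bar{t}_i = \bar{\sigma}_i + \sqrt{d}/2$. Because $\bar{t}_{i-1}$ is measurable with respect to $\sigma(\Pi_1, \dots, \Pi_{i-1})$ and $\Pi_i$ is independent of these processes, each $W_i$ is $\textup{Exponential}(c_i)$ and an easy induction shows that $W_1, \dots, W_k$ are jointly independent. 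The geometric key is that once a type $i$ site first appears at $\sigma_i$, within $\sqrt{d}/2$ additional time units the whole torus has type at least $i$. An induction on $i$ then yields $\sigma_i \leq \bar{\sigma}_i$: assuming $\sigma_{i-1} \leq \bar{\sigma}_{i-1}$, one has $\bar{t}_{i-1} \geq \sigma_{i-1} + \sqrt{d}/2$, so by time $\bar{t}_{i-1}$ every site already has type $\geq i-1$, and therefore the first subsequent point of $\Pi_i$ lies in $\psi_{i-1}$ and either creates, or has already created, a type $i$ site. Telescoping produces $\sigma_k \leq (k-1)\sqrt{d}/2 + \sum_{i=1}^k W_i$.

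For the lower stochastic dominance, I will exploit the identity
\[
P\bigl(\sigma_i - \sigma_{i-1} > s \mid \Pi_1, \dots, \Pi_{i-1}\bigr) = \exp\!\Bigl(-c_i \int_{\sigma_{i-1}}^{\sigma_{i-1}+s} Y_{i-1}(r)\,dr\Bigr) \geq e^{-c_i s},
\]
valid because $\psi_{i-1}$ is $\sigma(\Pi_1, \dots, \Pi_{i-1})$-measurable and $Y_{i-1} \leq 1$ on the unit torus. Conditionally on $\Pi_1, \dots, \Pi_{i-1}$, this shows that $\sigma_i - \sigma_{i-1}$ stochastically dominates $\textup{Exponential}(c_i)$. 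A standard quantile coupling---apply the conditional CDF of $\sigma_i - \sigma_{i-1}$ to itself to obtain jointly i.i.d.\ $\textup{Uniform}[0,1]$ variables $U_1, \dots, U_k$, and set $W_i = -\log(1-U_i)/c_i$---delivers independent $\textup{Exponential}(c_i)$ variables with $\sigma_i - \sigma_{i-1} \geq W_i$ almost surely, so $\sigma_k \geq \sum_{i=1}^k W_i$.

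The main piece of bookkeeping in both directions is confirming that the exponentials are jointly independent, rather than merely conditionally exponential given the past; this is automatic from the mutual independence of the Poisson processes $\Pi_1, \dots, \Pi_k$ and the fact that $\bar{t}_{i-1}$ and $\psi_{i-1}$ are functions of $\Pi_1, \dots, \Pi_{i-1}$ only. Beyond this and the geometric diameter bound $\sqrt{d}/2$, no analytic estimate is required.
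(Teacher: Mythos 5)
Your proof is correct and rests on the same two observations as the paper's: the total mutation rate is at most $c_i$ because the torus has unit volume (giving the lower bound), and a mutation fixates in exactly $\sqrt{d}/2$ time units (giving the upper bound). The paper phrases both bounds as comparisons with modified processes (instant fixation for the lower bound, suppression of mutations until fixation for the upper bound), whereas you make the couplings explicit via the driving Poisson processes and a quantile coupling of the conditional increments; this is a more formal rendering of essentially the same argument.
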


\begin{proof}
We first obtain a stochastic lower bound for $Z_{d,k}(c_1, \dots, c_k)$.
Consider a modification of the process such that as soon as the first type $i$ mutation occurs, every site instantly becomes type $i$.  This change can only reduce the time until some site acquires a $k$th mutation.  In this modified model, the distribution of the time until some individual acquires $k$ mutations is exactly the distribution of $\sum_{i=1}^k W_i$.  The lower bound in (\ref{sdom}) follows.

For the stochastic upper bound, consider a different modification of the process in which, once the first type $i$ mutation occurs, all further mutations are suppressed until every site has type $i$.  This change can only increase the time until some site acquires a $k$th mutation.  For all $x, y \in [0,1]^d$, we have $|x - y| \leq \frac{1}{2}\sqrt{d}$.  Therefore, because the radius of the region of type $i$ individuals increases at unit speed after the mutation, it takes a time of exactly $\frac{1}{2}\sqrt{d}$ for this mutation to spread to the entire torus.  Therefore, in this modified model, the distribution of the time until some individual acquires two mutations is exactly the distribution of $\frac{k-1}{2}\sqrt{d} + \sum_{i=1}^k W_i$, as we must take into account not only the waiting times for the $k$ mutations but also the times for the first $k-1$ of these mutations to spread to the rest of the torus.  This observation implies the upper bound in (\ref{sdom}).
\end{proof}

\begin{Prop}\label{distprop2}
For all positive integers $d$ and $k$ and all positive real numbers $c_1, \dots, c_k$, we have
$$\lim_{t \rightarrow 0} t^{-((k-1)d +k)} P(Z_{d,k}(c_1, \dots, c_k) \leq t) = \frac{(d!)^{k-1} \gamma_d^{k-1} c_1 \dots c_k}{((k-1)d + k)!}.$$
\end{Prop}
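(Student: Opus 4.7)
The plan is to recast this small-$t$ statement as a first/second-moment calculation for the number of successful $k$th mutations in the scaled system with $L = \alpha = 1$ and $\mu_i = c_i$. Let $M_k(t)$ denote the number of points $(y,s) \in \Pi_k$ with $s \leq t$ and $y \in \psi_{k-1}(s)$, so that $\{Z_{d,k}(c_1, \dots, c_k) \leq t\} = \{M_k(t) \geq 1\}$. Conditioning on $\mathcal{G}_{k-1}$, the variable $M_k(t)$ is Poisson with mean $c_k \int_0^t Y_{k-1}(s) \, ds$, whence $E[M_k(t)] = c_k \int_0^t E[Y_{k-1}(s)] \, ds$. The upper bound then follows at once from Markov's inequality and Lemma \ref{lemma:Occupancy}: using $E[Y_{k-1}(s)] \leq v_{k-1}(s)$ and $\int_0^t s^{(k-1)(d+1)} \, ds = t^{(k-1)d+k}/((k-1)d+k)$,
\[
P(Z_{d,k}(c_1, \dots, c_k) \leq t) \leq E[M_k(t)] \leq \frac{(d!)^{k-1} \gamma_d^{k-1} c_1 \dots c_k}{((k-1)d+k)!} \, t^{(k-1)d+k}.
\]

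For the matching lower bound I would use the elementary inequality $P(M \geq 1) \geq E[M] - \tfrac{1}{2} E[M(M-1)]$, valid for any $\N_0$-valued random variable $M$. The first-moment asymptotic $E[M_k(t)] \sim \frac{(d!)^{k-1} \gamma_d^{k-1} c_1 \dots c_k}{((k-1)d+k)!} t^{(k-1)d+k}$ follows by invoking Lemma \ref{thm:no_overlaps} to get $E[Y_{k-1}(s)] \sim v_{k-1}(s)$: the lemma is phrased as $N \to \infty$, but its proof only exploits the smallness of $y_j(s)$ and $\mu_j \alpha^d s^{d+1}$, both of which hold as $s \to 0$ with our fixed parameters. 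Integrating this equivalence across $s \in [0,t]$ gives the claimed asymptotic for $E[M_k(t)]$.

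It remains to show $E[M_k(t)(M_k(t)-1)] = o(t^{(k-1)d+k})$ as $t \to 0$. Since $M_k(t)$ is conditionally Poisson, its second factorial moment equals the square of its conditional mean, and Fubini yields
\[
E[M_k(t)(M_k(t)-1)] = c_k^2 \iint_{[0,t]^2} \iint_{\mathcal{T}^2} P(y \in \psi_{k-1}(s_1),\, x \in \psi_{k-1}(s_2)) \, dx \, dy \, ds_1 \, ds_2.
\]
Mirroring the disjoint-cone decomposition in the proof of Lemma \ref{thm:no_overlaps}, I would split the $(x,y)$ integral: when $|x-y| > s_1 + s_2$ the cones $R(y, s_1)$ and $R(x, s_2)$ are disjoint and the two events are independent, contributing at most $E[M_k(t)]^2$; when $|x-y| \leq s_1 + s_2$, bound the joint probability by $P(y \in \psi_{k-1}(s_1))$ and the $x$-integral by $\gamma_d (2t)^d$, contributing at most a constant times $t^{d+1} E[M_k(t)]$. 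Both pieces are $o(E[M_k(t)])$ because $E[M_k(t)] \to 0$ and $t^{d+1} \to 0$, so $P(M_k(t) \geq 1) \sim E[M_k(t)]$ and the result follows. The main obstacle is the careful handling of overlapping cones in the second moment; this is the only place where a genuine geometric estimate beyond Markov's inequality and Lemma \ref{thm:no_overlaps} is needed, and it closely parallels what is already carried out in the proof of Lemma \ref{thm:no_overlaps}.
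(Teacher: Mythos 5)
Your argument is correct, and while your upper bound coincides with the paper's (Markov's inequality applied to the expected number of successful type-$k$ mutations, with the mean volume bounded by the no-overlap recursion of Lemma \ref{lemma:Occupancy}), your lower bound takes a genuinely different route. The paper constructs a pathwise-dominated modification in which only the first mutation of each type is permitted, so that $\sigma_k^*$ becomes a sum of $k$ independent waiting times with explicit densities, and then evaluates a $k$-fold convolution integral to extract the constant $(d!)^{k-1}t^{(k-1)d+k}/((k-1)d+k)!$ directly, together with an explicit error factor $J_k(t)\to 1$. You instead run a second-moment (truncated inclusion--exclusion) argument on the conditionally Poisson count $M_k(t)$: the first-moment asymptotics come from re-using Lemma \ref{thm:no_overlaps}, and the second factorial moment is controlled by the same disjoint-cone decomposition as in that lemma's proof, with the near-diagonal contribution of order $t^{d+1}E[M_k(t)]$. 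Both steps you flag as needing care are indeed fine: the proof of Lemma \ref{thm:no_overlaps} is non-asymptotic (it only needs $y_j(s)$ and $c_j s^{d+1}$ small, and it yields the bound $P(0\in\psi_j(s))\geq(1-\eps)^j y_j(s)$ uniformly in $s\in[0,t]$, which is what justifies integrating), and the conditional-Poisson identity $E[M_k(t)(M_k(t)-1)\mid\mathcal{G}_{k-1}]=\Lambda^2$ together with Fubini gives exactly the double space-time integral you write. What your approach buys is the avoidance of the paper's ``tedious'' $k$-fold integral and a cleaner reuse of machinery already built for the $N\to\infty$ results; what the paper's approach buys is a fully explicit, non-asymptotic two-sided bound (which also feeds naturally into the stochastic-domination picture of Proposition \ref{distprop1}) without having to re-examine the regime of validity of Lemma \ref{thm:no_overlaps}.
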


\begin{proof}
We will consider the process in which $L = 1$, $\alpha = 1$, and $\mu_i = c_i$ for all $i \in \{1, \dots, k\}$.  We first obtain an upper bound on the probability that $k$ mutations occur by time $t$.  Let $m_k(t)$ be the mean volume of the region with type $k$ individuals at time $t$.  
The expected rate of mutations to type $k$ at time $u$ is bounded above by $c_k m_{k-1}(u)$, and if such a mutation occurs at time $u$, then the volume of the type $k$ region created by this mutation at time $t$ will be at most $\gamma_d (t - u)^d$.  It follows that
$$m_k(t) \leq \int_0^t c_k m_{k-1}(u) \gamma_d (t - u)^d \: du.$$  Also, because the entire torus has volume $1$, we know that $m_0(t) \leq 1$.  Therefore, by the same inductive argument used to establish (\ref{vkdef}) with $N = 1$, $\alpha = 1$, and $\mu_i = c_i$ for all $i$, we get that for all positive integers $k$,
\begin{equation}\label{mjt}
m_k(t) \leq \frac{(d!)^k \gamma_d^k c_1 \dots c_k t^{k(d+1)}}{(k(d+1))!}.
\end{equation}
Therefore, the expected number of mutations to type $k$ that occur by time $t$ is $$\int_0^t c_k m_{k-1}(u) \: du \leq \frac{(d!)^{k-1} \gamma_d^{k-1} c_1 \dots c_k t^{(k-1)d + k}}{((k-1)d + k)!}.$$  It now follows from Markov's Inequality that
\begin{equation}\label{Zkupper}
P(Z_{d,k}(c_1, \dots, c_k) \leq t) \leq \frac{(d!)^{k-1} \gamma_d^{k-1} c_1 \dots c_k t^{(k-1)d + k}}{((k-1)d + k)!}.
\end{equation}

It remains to obtain a lower bound.  For the lower bound, we will consider a modified process in which, for all $j \geq 1$, only the first mutation to type $j$ is permitted.  This modification can only reduce the probability of observing a mutation to type $k$ by time $t$.  We will assume that $t < 1/2$, so that if a mutation appears at time $s < t$, the volume of the region to which the mutation has spread by time $u$, where $s < u < t$, is exactly $\gamma_d(u - s)^d$.  Let $\sigma_j^*$ denote the time at which the $j$th mutation appears in this modified model.  Let $\tau_1 = \sigma_1^*$, and for $j \geq 2$, let $\tau_j = \sigma^*_j - \sigma^*_{j-1}$.  Note that $$P(Z_{d,k}(c_1, \dots, c_k) \leq t) \geq P(\sigma_k^* \leq t) = P(\tau_1 + \dots + \tau_k \leq t).$$  Now $\tau_1$ has an exponential distribution with rate parameter $c_1$, so its probability density function is $$f_1(u) = c_1 e^{-c_1 u}, \hspace{.2in}u > 1/2.$$  For $j \geq 2$ and $u < 1/2$, we have
$$P(\tau_j > u) = \exp \bigg( - c_j \int_0^u \gamma_d (u - s)^d \: ds \bigg) = \exp \bigg(- \frac{c_j \gamma_d u^{d+1}}{d+1} \bigg).$$
Therefore, denoting by $f_j$ the probability density function of $\tau_j$, we have for $0 < u < 1/2$,
$$f_j(u) = c_j \gamma_d u^d \exp \bigg( - \frac{c_j \gamma_d u^{d+1}}{d+1} \bigg).$$  Furthermore, the random variables $\tau_1, \dots, \tau_k$ are independent.  Therefore,
$$P(\tau_1 + \dots + \tau_k \leq t) \geq \int_0^t \int_0^{t - s_1} \ldots \int_0^{t - s_1 - \ldots - s_{k-1}} f_1(s_1) \dots f_k(s_k) \: ds_k \dots ds_1.$$
Writing $$J_k(t) = e^{-c_1 t} \prod_{j=2}^k \exp \bigg( - \frac{c_j \gamma_d t^{d+1}}{d+1} \bigg),$$ it follows that
$$P(\tau_1 + \dots + \tau_k \leq t) \geq J_k(t) \gamma_d^{k-1} c_1 \dots c_k \int_0^t \int_0^{t - s_1} \ldots \int_0^{t - s_1 - \ldots - s_{k-1}} s_2^d \dots s_k^d \: ds_k \dots ds_1.$$
A tedious calculation yields that the $k$-fold integral above equals $$\frac{(d!)^{k-1} t^{(k-1)d + k}}{((k-1)d + k)!}.$$  Therefore,
\begin{equation}\label{Zklower}
P(Z_{d,k}(c_1, \dots, c_k) \leq t) \geq \frac{J_k(t) (d!)^{k-1} \gamma_d^{k-1} c_1 \dots c_k t^{(k-1)d + k}}{((k-1)d + k)!}.
\end{equation}
Because $$\lim_{t \rightarrow 0} J_k(t) = 1,$$ the result follows from (\ref{Zkupper}) and (\ref{Zklower}).
\end{proof}

\end{document}